\newcommand{\et}{\textit{et al.}}
\theoremstyle{remark}
\newtheorem{remark}{Remark}
\title{A Tensor Train Approach for  Deterministic Arithmetic Operations on Discrete Representations of Probability Distributions\thanks{Version of \today.}}
\author{Gerhard Kirsten\thanks{Signaloid.}
\and Bilgesu Bilgin\footnotemark[2] \and Janith Petangoda\footnotemark[2]
\and Phillip Stanley-Marbell\footnotemark[2] \thanks{Department of Engineering, University of Cambridge.}}
\begin{document}

\maketitle

\begin{abstract}
Computing with discrete representations of high-dimensional probability distributions is fundamental to uncertainty quantification, Bayesian inference, and stochastic modeling. However, storing and manipulating such distributions suffers from the curse of dimensionality, as memory and computational costs grow exponentially with the number of input distributions. Typical workarounds using Monte Carlo methods with thousands to billions of samples incur high computational costs and produce inconsistent results due to stochasticity. We present an efficient tensor train method for performing exact arithmetic operations on discretizations of continuous probability distributions while avoiding exponential growth in the representation size. Our approach leverages a low-rank tensor train decomposition to represent latent random variables compactly using Dirac deltas, enabling deterministic addition, subtraction and multiplication operations directly in the compressed format. Additionally, we present an efficient implementation strategy based on sparse matrices and specialized data structures that further enhances computational performance. We have designed the representation and distribution arithmetic methods to enable implementation within a general-purpose microprocessor or specialized hardware accelerator. 

This article presents theoretical analyses of the memory and computational complexity for common operations that show polynomial scaling under rank assumptions and illustrates how several statistics of the latent variable can be computed in polynomial memory and complexity. Numerical experiments on problems from randomized numerical linear algebra to stochastic differential equations with financial applications demonstrate orders-of-magnitude improvements in memory usage and computational time compared to conventional approaches, enabling tractable deterministic computations on discretized random variables in previously intractable dimensions.
\end{abstract}

\begin{keywords}
tensor train, probabilistic arithmetic, discrete representations of probability distributions, uncertainty quantification, curse of dimensionality, scientific computing, high dimensional computing
\end{keywords}

\begin{AMS}
65C20, 15A69, 65F99, 65C05
\end{AMS}

\section{Introduction}\label{sec:introduction}
Many stochastic systems can be viewed as computations which propagate uncertainty through arithmetic operations on probability distributions \cite{zhang2023fundamentals, taylor1998introduction, grimmett2001probability, buzacott1993stochastic, glasserman2003monte, gardiner2009handbook}. The traditional approach to propagating probability distributions through computations is to use some variant of Monte-Carlo methods \cite{glasserman2003monte}. The \textit{sample-then-compute} approach of Monte Carlo methods requires enumerating the combinatorial space of joint probability distributions. This combinatorial challenge appears across diverse applications: financial portfolio modeling, where asset correlations interact through stochastic differential equations \cite{glasserman2003monte, oosterlee2019mathematical}; climate modeling, where uncertain parameters combine through complex mathematical relationships \cite{palmer2014climate, schneider2017uncertainty}; engineering reliability analysis, where material properties affect system performance through coupled physical processes \cite{ditlevsen2007structural, melchers2018structural}. In each case, the fundamental mathematical challenge remains the same: the enumeration of possible outcomes grows exponentially when combining multiple uncertain inputs.

\subsection{An Illustrative Example}
\label{sec:example}
The impact of such combinations becomes clear through a simple example. Rolling a single die (die A) yields 6 possible outcomes. A second die (die B) also yields 6 possible outcomes. Determining the possible outcomes of the sum of the two dice requires considering every combination of the outcomes of die A and die B, resulting in 36 ($6^2$) possible outcome combinations. Introducing a third die (die C) increases the number of outcome combinations to 216 ($6^3$). Although this remains tractable for small numbers of variables, the enumeration grows rapidly: for 10 dice, the total reaches over 60 million ($6^{10}$) outcome combinations, and for 1,000 dice, the number of combinations exceeds $6^{1000}$, far beyond the capacity of modern computing systems.

This illustrates a broader problem: combining multiple probability distributions by explicitly enumerating them (sampling) leads to exponential growth in the enumeration of possible outcome combinations, even though what one cares about might not specifically be to enumerate the outcomes, but rather, to obtain a probability density function (PDF) over the possible outcomes. As the number of variables increases, the memory and computational resources required to enumerate and process all possible outcome combinations become intractable.

\subsection{The Status Quo in Uncertainty Quantification}
Traditional uncertainty quantification methods address the challenge of the combinatorial explosion of the enumerated joint distribution support through various approaches, each with distinct limitations. Monte Carlo methods \cite{hammersley1964monte, liu2001monte} sample from input distributions and propagate uncertainty through repeated evaluations of some computational kernel. While statistically sound, these approaches require thousands to millions of evaluations to achieve reliable estimates, making them computationally expensive and inherently stochastic (repeating a million iteration Monte Carlo will never result in the same output distribution). The NIST Uncertainty Machine \cite{nist-mc} employs either Monte Carlo sampling or first-order Taylor approximations based on distribution moments, offering computational efficiency but suffering accuracy limitations with nonlinear transformations or non-Gaussian distributions. Polynomial chaos expansions \cite{xiu2002wiener} represent uncertain quantities using orthogonal polynomial bases, while interval arithmetic \cite{moore2009introduction} bounds uncertainty using intervals rather than full distributions. PaCAL \cite{korzen2014pacal} implements arithmetic operations on independent random variables by approximating probability density functions using Chebyshev polynomials \cite{trefethen2015computing, driscoll2014chebfun, wright2015extension}, enabling operations on continuous distributions but limiting applicability to parametric distributions. Additionally, PaCAL restricts its scope to independent random variables, which avoids the computational complexities of joint distributions but prevents its use in problems where variable dependencies are significant. Recent research has made similar and complementary observations.

It is notable that all existing uncertainty quantification methods are software-based implementations running on conventional processor architectures, inheriting limitations from the mismatch between continuous uncertainty propagation and discrete computational paradigms. This observation naturally motivates the exploration of processor-level approaches that can natively handle probabilistic computations.

\subsection{Discrete Representations of Probability Distributions}

Let $\{x_i\}$ denote support points and $\{m_i\}$ denote corresponding probability masses. Tsoutsouras \et~\cite{tsoutsouras2021laplace} introduced the Telescoping Torques Representation (TTR) to represent probability distributions using weighted sums of Dirac delta functions:

\begin{equation}
p_{X}(x) = \sum_{i=1}^{N} m_i \delta(x - x_i).
\end{equation}
Arithmetic operations such as addition, multiplication, and other transformations can be performed deterministically on these representations, avoiding the stochasticity of sampling-based methods. The accuracy of this approximation scales as $\mathcal{O}(1/N)$ \cite{bilgin_quantization_2025} where $N$ represents the number of representation points, allowing controllable precision through increased representation size. Tsoutsouras \et~\cite{tsoutsouras2021laplace} present engineering heuristics that allow this representation to handle a limited number of scenarios involving autocorrelation but not general correlation. Subsequent work by Bilgin \et~\cite{bilgin_patent} extended the basic one-dimensional representation (TTR) to joint distributions (jTTR), but faces a scalability limitation that prevents their application to high-dimensional joint distributions. 

For a $d$-dimensional distribution with $n$ support points per dimension, the total representation size is $N = n^d$. Computing the convolution of two such Dirac mixtures, each with $N$ support points, requires $\mathcal{O}(N^2)$ operations and produces a result with up to $N^2$ support points. Since $N$ must scale exponentially with dimension to maintain accuracy (typically $N = n^d$ for $n$ points per dimension), this becomes computationally intractable for high-dimensional distributions---precisely the regime where the example in Section \ref{sec:example} shows the combinatorial explosion becomes overwhelming.

\subsection{Tensor Train Methods for High-Dimensional Distributions}

Tensor decomposition techniques \cite{kolda2009tensor, tucker1966some, delathauwer2000multilinear, harshman1970foundations}, particularly the tensor train (TT) format \cite{oseledets2011tensor}, have emerged as powerful tools for representing high-dimensional tensors with exponential compression. The TT format represents $d$-dimensional tensors as products of three-dimensional tensors, achieving storage reduction from $\mathcal{O}(n^d)$ to $\mathcal{O}(dnr^2)$ where $r$ denotes the maximum TT-rank.

Recent work has explored tensor train applications in uncertainty quantification. One approach \cite{novikov2021tensor} demonstrated tensor train density estimation for approximating continuous probability distributions, showing that smooth distributions can be efficiently represented in TT format. Another method \cite{dolgov2020approximation} developed tensor train surrogates to enable more efficient sampling from high-dimensional continuous distributions, achieving significant improvements over traditional MCMC methods \cite{brooks2011handbook} by reducing autocorrelation times and enabling quasi-Monte Carlo integration \cite{dick2013high}.

However, these approaches remain fundamentally within the sampling paradigm. They require a known target probability density function to approximate and still rely on Monte Carlo estimation with its inherent stochasticity and convergence issues. While these methods successfully address the computational challenges of sampling from complex high-dimensional distributions, they do not eliminate the inherent limitations of stochastic approaches: non-reproducible results, convergence uncertainty, and the inability to perform exact arithmetic operations on distributions themselves.

\subsection{A New Finite-Dimensional Approach for High-Dimensional Distributions}
We address this scalability gap by developing the first tensor train framework for deterministic arithmetic operations on discretizations of high-dimensional continuous probability distributions. Our approach differs from both existing tensor train methods and previous deterministic approaches in several key ways.

Compared to existing tensor train probabilistic methods~\cite{novikov2021tensor, dolgov2020approximation}, our framework eliminates sampling entirely by performing deterministic arithmetic operations directly on discrete representations of continuous probability distributions (building on the prior work of single- and multi-dimensional Dirac mixtures \cite{tsoutsouras2021laplace, bilgin_patent}), while preserving the tensor train structure. Rather than building surrogates limited to known distributions, we adaptively construct tensor train representations through arithmetic operations themselves, enabling the combination of arbitrary distributions without requiring knowledge of their underlying probability density functions.

Compared to previous deterministic approaches like Tsoutsouras \et~\cite{tsoutsouras2021laplace}, which introduced arithmetic operations on discrete representations of continuous probability distributions using Dirac mixture representations (TTR), our method addresses the core scalability limitations that are still present in follow-on work (jTTR) by Bilgin \et~\cite{bilgin_patent}. While TTR \cite{tsoutsouras2021laplace} achieves excellent performance for low-dimensional problems, even its extension to high-dimensional distributions, jTTR~\cite{bilgin_patent}, cannot scale to high-dimensional joint distributions due to exponential growth in support points. Our tensor train TTR (TT-TTR) approach maintains the deterministic accuracy of Dirac mixture methods (TTR and jTTR), while achieving exponential compression for high-dimensional problems. Throughout this paper, we refer to the original Dirac mixture method as TTR (Telescoping Torques Representation) \cite{tsoutsouras2021laplace}, its joint distribution extension as jTTR \cite{bilgin_patent}, and our tensor train variant as TT-TTR.

The key innovation lies in combining the deterministic advantages of Dirac mixture representations with the exponential compression capabilities of tensor train decompositions. Consider a $d$-dimensional random variable $\mathbf{X} = (X_1, \ldots, X_d)$ with joint probability mass function $p_{\mathbf{X}}(\mathbf{x})$. While traditional Dirac mixture approaches require exponential storage $\mathcal{O}(n^d)$, our tensor train representation compresses the results of arithmetic operations on distributions, reducing storage to be linear in $d$\footnote{Although tensor train representations typically exhibit memory complexity $\mathcal{O}(dnr^2)$, our implementation strategy eliminates the dependence on $n$.}. This enables deterministic arithmetic with discretizations of probability distributions in dimensions previously considered intractable. Returning to the dice example of Section~\ref{sec:example}, we can now handle the $1{,}000$-dice scenario that exceeds $6^{1000}$ combinations. Previously this example required $6^{1000}$ memory elements, but using the tensor train approach we illustrate how this can be limited to $12{,}012$ memory elements through compressed tensor representations rather than explicit enumeration.

\subsection{Contributions}

The main contributions of this work are:

\paragraph{A review of the single- and multi-dimensional Dirac mixture representations} We review the TTR and jTTR Dirac mixture representations of Tsoutsouras \et~\cite{tsoutsouras2021laplace} and Bilgin \et~\cite{bilgin_patent} respectively, formulated in terms of matrices and tensors (Section~\ref{sec:operations}).

\paragraph{Novel tensor train arithmetic framework} We introduce the first tensor train TTR (TT-TTR) method for performing exact arithmetic operations (addition, subtraction and multiplication) directly on discrete representations of probability distributions, that  avoid the curse of dimensionality (Section~\ref{sec:tensor_train}).

\paragraph{Theoretical analysis and statistical computation} We provide a complexity analysis demonstrating polynomial scaling, and show how various statistics of the latent variables in TT-TTR format can be computed directly with polynomial computation and memory complexity (Section~\ref{sec:tensor_train} and Section~\ref{sec:complexity}).

\paragraph{Efficient implementation strategy and validation} We provide an efficient implementation strategy leveraging sparse matrices and specialized data structures, achieving further memory complexity reduction beyond the tensor train compression by a factor proportional to the discrete representation size. We perform comprehensive experiments across diverse applications from randomized linear algebra to financial stochastic modeling, demonstrating orders-of-magnitude improvements in memory usage and computational time, compared to the representations of Tsoutsouras \et~\cite{tsoutsouras2021laplace} and Bilgin \et~\cite{bilgin_patent} (Section~\ref{sec:algorithm}, Section~\ref{sec:complexity} and Section~\ref{sec:experiments}).

\paragraph{Deterministic arithmetic on probability distributions} Unlike stochastic Monte Carlo approaches that rely on sampling, our method performs exact reproducible arithmetic operations on discrete representations of probability distributions. While existing deterministic methods for distribution arithmetic suffer from the curse of dimensionality, our approach makes such computations tractable without sacrificing accuracy or introducing stochastic variability.

\subsection{Notation and Mathematical Preliminaries}

We use uppercase letters $X, Y, Z$ for random variables with probability mass functions $p_X(x), p_Y(y)$. Individual support points are denoted $x_i, x_j$ with masses $m_i, m_j$. We use $N_X$ for the number of support points of $X$, $n_k$ for support points in dimension $k$, and $d$ for the total number of independent variables.

Bold lowercase letters denote vectors: $\mathbf{s}^X \in \mathbb{R}^{N_X}$ and $\mathbf{m}^X \in \mathbb{R}^{N_X}$ are support and mass vectors for $X$. For multidimensional cases, $\mathbf{S}^X \in \mathbb{R}^{n_1 \times n_2}$ and $\mathcal{S}^X \in \mathbb{R}^{n_1 \times \cdots \times n_d}$ denote support matrices and tensors. Standard notation includes $\mathbf{1}$ for the $n$-dimensional ones vector, $\mathbf{A}^{\top}$ for transpose, and $\text{vec}(\cdot)$ for vectorization.

Calligraphic letters $\mathcal{M}, \mathcal{S}, \mathcal{I} \in \mathbb{R}^{n_1 \times \cdots \times n_d}$ denote tensors. The Kronecker product $\mathbf{A} \otimes \mathbf{B}$ for $\mathbf{A} \in \mathbb{R}^{n\times m}$ and $\mathbf{B} \in \mathbb{R}^{p\times q}$ yields a $pn\times qm$ matrix with blocks $a_{ij}\mathbf{B}$.

For tensor trains, $\mathbf{G}^{(k)}_{i_k} \in \mathbb{R}^{r_{k-1} \times r_k}$ denotes the $i_k$-th slice of core $\mathcal{G}^{(k)}$. We distinguish mass and support cores as $\mathcal{M}^{(k)}$ and $\mathcal{S}^{(k)}$. The TT-ranks $r_k$ connect modes $k$ and $k+1$ with $r_0 = r_d = 1$, and $r = \max_k r_k$.

\begin{definition}[Discrete Convolution]\label{def:convolution}
For probability mass functions $p_{X}$ and $p_{Y}$, their convolution $(p_{X} * p_{Y})(z)$ gives the mass function of $Z = X + Y$, computed as $\sum_x p_{X}(x) p_{Y}(z-x)$.
\end{definition}

\begin{definition}[Product Distribution]\label{def:product}
For independent variables $X$ and $Y$, the mass function of $W = XY$ is $p_{W}(w) = \sum_{x \neq 0} p_{X}(x) p_{Y}(w/x)$ for $w \neq 0$.
\end{definition}

The remainder of this paper is organized as follows. Section~\ref{sec:operations} presents the mathematical foundations for arithmetic operations on discretized random variables. Section~\ref{sec:tensor_train} develops the tensor train framework for efficient representating probability distributions. Section~\ref{sec:algorithm} presents implementation strategies that exploit sparsity and shared structure. Section~\ref{sec:complexity} provides a comparative complexity analysis, while Section~\ref{sec:experiments} demonstrates the effectiveness through numerical experiments. Section~\ref{sec:conclusions} concludes with implications and future directions.

\section{Basic Operations on Random Variables}\label{sec:operations}

We establish the mathematical foundation for arithmetic operations on discretized random variables based on the Dirac delta representation introduced by Tsoutsouras \et~\cite{tsoutsouras2021laplace} and Bilgin \et~\cite{bilgin_patent}. This foundation provides the basis for our tensor train computational framework.

\subsection{Discrete Dirac Delta Representation of Random Variables}

The Dirac mixture representations of Tsoutsouras \et~\cite{tsoutsouras2021laplace} and Bilgin \et~\cite{bilgin_patent} represent a random variable $X$ with finite support using a weighted sum of Dirac delta functions. Let $\{x_i\}_{i=1}^{N_{X}}$ denote the support points of $X$ and $\{m_i\}_{i=1}^{N_{X}}$ the corresponding probability masses satisfying $m_i \geq 0$ and $\sum_{i=1}^{N_{X}} m_i = 1$. Then the probability mass function can be expressed as
\begin{equation}\label{eq:dirac_representation}
p_{X}(x) = \sum_{i=1}^{N_{X}} m_i \delta(x - x_i),
\end{equation}
where $\delta(\cdot)$ is the Dirac delta function. Operations on the pdf that need to evaluate the density at a point between two Dirac deltas interpolate the cumulative density function (CDF) between neighboring Dirac deltas in the representation. The combination of the compact representation of the weighted Dirac mixture and the interpolation via the CDF during operations that need to integrate the pdf over its support, are the key insights of the Dirac mixture representation.

This representation provides controllable accuracy with approximation error scaling as $\mathcal{O}\left(\frac{1}{N_{X}}\right)$ where $N_{X}$ is the number of support points \cite{bilgin_quantization_2025}. The sparsity of the representation ensures that only non-zero probability regions require storage, and operations can be performed deterministically on support points without sampling.

Given the Dirac delta representation \eqref{eq:dirac_representation}, the key challenge lies in determining the optimal placement of support points $\{x_i\}$ and their associated masses $\{m_i\}$. While alternatives such as regularly-quantized histograms and adaptive binning exist, we employ the Telescoping Torques Representation (TTR) \cite{tsoutsouras2021laplace} as the basis for the tensor train representation due to its superior performance under arithmetic operations \cite{bilgin_quantization_2025}.

\subsubsection{The Telescoping Torques Representation (TTR) Construction}
The approaches of Tsoutsouras \et~\cite{tsoutsouras2021laplace} and Bilgin \et~\cite{bilgin_patent} construct Dirac mixture representations using the Telescoping Torques Representation (TTR) algorithm. TTR is a domain-splitting, divide-and-conquer algorithm that recursively partitions probability distributions based on their mean values.

For empirical samples $\{s_1, s_2, \ldots, s_M\}$ with equal weights $\frac{1}{M}$, TTR applies the recursive mean-based splitting strategy. This process continues until reaching the desired discretization level $n$, producing $2^n$ Dirac measures that capture both the central tendencies and probability masses of the original distribution. For parametric distributions $P$, TTR leverages analytic methods when possible to determine the set of Dirac delta positions and masses that optimally represent the said distribution at the given number of Dirac deltas (i.e., the representation size). More precisely, TTR uses closed-form expressions for the cumulative distribution function and first moment function to compute exact split points and masses, employing location-scale transformations for distributions in these families. 
We refer the reader to Tsoutsouras \et~\cite{tsoutsouras2021laplace} for a detailed description of the TTR algorithm.

\subsection{Arithmetic Operations in Vector Form}
\label{sec:arithmetic}
The Dirac delta representation naturally discretizes the continuous convolution and product operations. For random variables $X$ and $Y$ with representations $p_X(x) = \sum_{i=1}^{N_X} m_i^X \delta(x - x_i)$ and $p_Y(y) = \sum_{j=1}^{N_Y} m_j^Y \delta(y - y_j)$, arithmetic operations reduce to pairwise combinations \cite{tsoutsouras2021laplace}. The convolution $(p_X * p_Y)(z) = \sum_{i,j} m_i^X m_j^Y \delta(z - (x_i + y_j))$ for addition and the product distribution $p_W(w) = \sum_{i,j} m_i^X m_j^Y \delta(w - x_i y_j)$ for multiplication reveal a common structure: support points combine via the arithmetic operation while probability masses multiply.

This pairwise structure motivates a vector representation. For a random variable $X$, we define
\begin{equation}\label{eq:vector_representation}
\mathbf{s}^{X} = [x_1, x_2, \ldots, x_{N_{X}}]^{\top}, \quad \mathbf{m}^{X} = [m_1, m_2, \ldots, m_{N_{X}}]^{\top},
\end{equation}
where $\|\mathbf{m}^{X}\|_1 = 1$ and $m_i \geq 0$. The pairwise operations then naturally express as outer products. For the sum $Z = X + Y$, the support matrix
\begin{equation}\label{eq:support_matrix_add}
\mathbf{S}^{Z} = \mathbf{s}^{X} \mathbf{1}_{N_{Y}}^{\top} + \mathbf{1}_{N_{X}} (\mathbf{s}^{Y})^{\top}
\end{equation}
captures all pairwise sums $x_i + y_j$. Crucially, $\mathbf{S}^{Z}$ decomposes as the sum of two rank-1 matrices, yielding rank at most 2. For multiplication $W = XY$, the support matrix
\begin{equation}\label{eq:support_matrix_mult}
\mathbf{S}^{W} = \mathbf{s}^{X} (\mathbf{s}^{Y})^{\top}
\end{equation}
forms a rank-1 outer product. The probability mass matrix maintains rank-1 structure for both operations:
\begin{equation}\label{eq:mass_matrix}
\mathbf{M}^{Z} = \mathbf{M}^{W} = \mathbf{m}^{X} (\mathbf{m}^{Y})^{\top}.
\end{equation}
Vectorizing these matrices yields the final representations $\mathbf{s}^{\ast} = \text{vec}(\mathbf{S}^{\ast})$ and $\mathbf{m}^{\ast} = \text{vec}(\mathbf{M}^{\ast})$ with $N_X N_Y$ entries. Subtraction follows identically with $\mathbf{S}^{Z} = \mathbf{s}^{X} \mathbf{1}_{N_{Y}}^{\top} - \mathbf{1}_{N_{X}} (\mathbf{s}^{Y})^{\top}$.

\begin{figure}[htbp]
\centering
\begin{subfigure}{0.48\textwidth}
    \includegraphics[width=\textwidth]{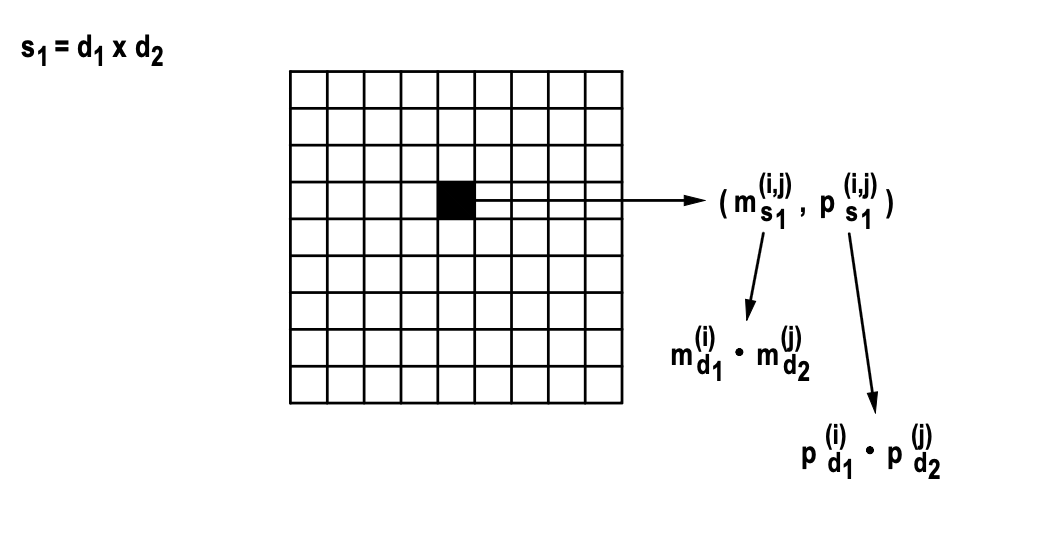}
\end{subfigure}
\hfill
\begin{subfigure}{0.48\textwidth}
    \includegraphics[width=\textwidth]{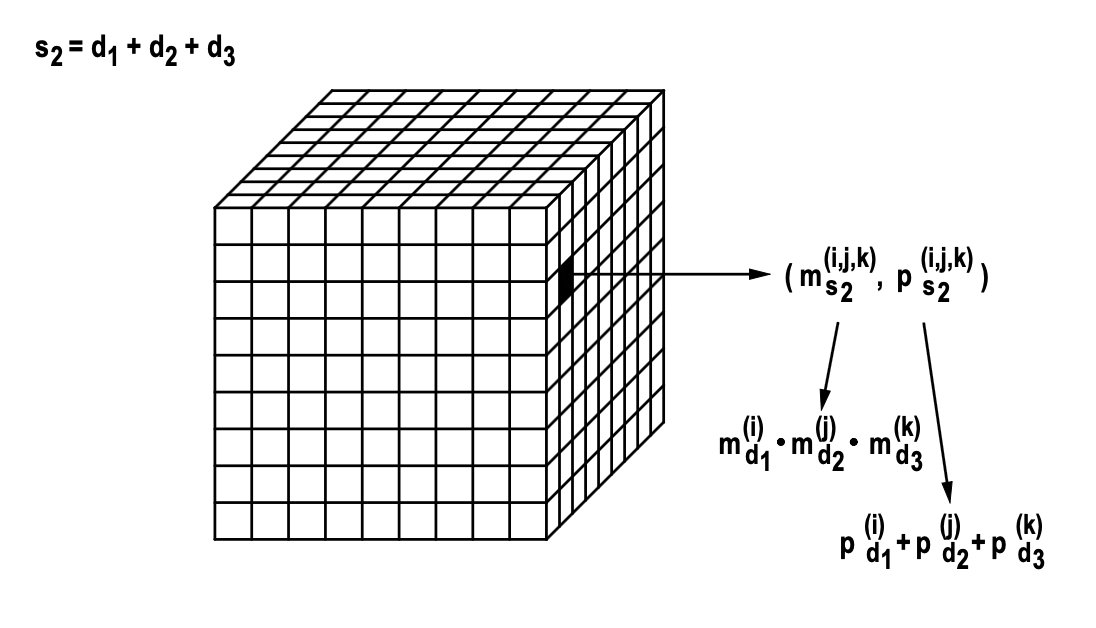}
\end{subfigure}
\caption{Matrix and tensor structures resulting from (left) multiplication of two and (right) addition of three independent TTR-discretized distributions.}
\label{fig:randomMatrix}
\end{figure}

The extension to $d$ independent random variables $X_1, X_2, \ldots, X_d$, known as jTTR \cite{bilgin_patent}, produces $d$-dimensional tensors while preserving the low-rank structure. The joint probability mass tensor
\begin{equation}\label{eq:joint_mass_tensor}
\mathcal{M} = \mathbf{m}^{X_1} \otimes \mathbf{m}^{X_2} \otimes \cdots \otimes \mathbf{m}^{X_d} \in \mathbb{R}^{n_1 \times n_2 \times \cdots \times n_d}
\end{equation}
maintains rank 1 as a Kronecker product of vectors. For the sum $Z = \sum_{k=1}^d X_k$, the support tensor
\begin{equation}\label{eq:addition_tensor}
\mathcal{S}^{Z} = \sum_{k=1}^d \left( \bigotimes_{j=1}^{k-1} \mathbf{1}_{n_j} \right) \otimes \mathbf{s}^{X_k} \otimes \left( \bigotimes_{j=k+1}^{d} \mathbf{1}_{n_j} \right)
\end{equation}
generalizes the rank-2 matrix structure, while the product $W = \prod_{k=1}^d X_k$ yields the rank-1 tensor
\begin{equation}\label{eq:multiplication_tensor}
\mathcal{S}^{W} = \mathbf{s}^{X_1} \otimes \mathbf{s}^{X_2} \otimes \cdots \otimes \mathbf{s}^{X_d}.
\end{equation}
These operations produce $\prod_{k=1}^d n_k$ support points and masses, manifesting the curse of dimensionality. With $n_k = 10$ points per variable and $d = 10$ variables, storing $10^{10}$ values exceeds computational limits. However, the preserved low-rank structure (rank 2 for addition supports, rank 1 for multiplication supports and all mass tensors) provides the foundation for the tensor train TTR representation developed in Section~\ref{sec:tensor_train}.

\section{Tensor Train Framework for Arithmetic on Discretized Probability Distributions}\label{sec:tensor_train}

We develop a tensor train framework for performing arithmetic operations on discrete representations of probability distributions while avoiding the exponential scaling that plagues traditional approaches. Our method constructs tensor train representations that encode arithmetic operations directly in compressed form and adaptively preserves this structure throughout computation. The resulting TT-TTR representation remains fundamentally a Dirac mixture within the TTR family introduced by Tsoutsouras et al.~\cite{tsoutsouras2021laplace} (TTR) and extended by Bilgin et al.~\cite{bilgin_patent} (jTTR), preserving the same approximation guarantees and mean-preserving properties. The key innovation lies in the tensor train factorization that provides a memory-efficient encoding of the latent joint distribution, reducing storage from exponential to polynomial complexity while maintaining the mathematical equivalence to the full Dirac mixture representation. Specifically, we do not follow an \textit{expand-then-compress} methodology. Instead, we update the tensor train cores directly in the compressed space to perform arithmetic operations, without ever forming the full tensor. This is crucial, as all memory benefits gained from the tensor train representation would be lost if we ever formed the full tensor, even if the final result were stored in compressed form.

\subsection{Tensor Train Decomposition}
A $d$-dimensional tensor $\mathcal{T} \in \mathbb{R}^{n_1 \times n_2 \times \cdots \times n_d}$ has a tensor train decomposition \cite{oseledets2011tensor} if it can be written as\footnote{We explicitly use an equals sign here, however practitioners often apply further truncations of the cores, which would turn this expression into an approximation.}:
\begin{equation}\label{eq:tt_decomposition}
\mathcal{T}_{i_1, i_2, \ldots, i_d} = \mathbf{G}^{(1)}_{i_1} \mathbf{G}^{(2)}_{i_2} \cdots \mathbf{G}^{(d)}_{i_d},
\end{equation}
where $\mathbf{G}^{(k)}_{i_k} \in \mathbb{R}^{r_{k-1} \times r_k}$ are matrix slices of the 3-way tensor cores $\mathcal{G}^{(k)} \in \mathbb{R}^{r_{k-1} \times n_k \times r_k}$, with boundary conditions $r_0 = r_d = 1$. The integers $r_k$ for $k = 1, \ldots, d-1$ are the TT-ranks, and $r = \max_k r_k$ is the maximum rank. Storage complexity reduces from $\mathcal{O}(\prod_{k=1}^d n_k)$ to $\mathcal{O}(dnr^2)$, achieving exponential compression when $r \ll n$.

\subsection{Tensor Train Representation of Probability Distributions}
We represent the results of arithmetic operations on discrete representations of probability distributions using tensor train decompositions. Following the tensor formulation from Section~\ref{sec:arithmetic}, when combining $d$ independent random variables $X_1, \ldots, X_d$ with support vectors $\mathbf{s}^{X_k} \in \mathbb{R}^{n_k}$ and mass vectors $\mathbf{m}^{X_k} \in \mathbb{R}^{n_k}$, we construct two tensors to represent the outcome distribution. The support tensor $\mathcal{S} \in \mathbb{R}^{n_1 \times \cdots \times n_d}$ encodes the possible outcome values, where each entry $\mathcal{S}_{i_1, \ldots, i_d}$ represents the result of applying the arithmetic operation to the support points $s_{i_1}^{X_1}, \ldots, s_{i_d}^{X_d}$. The probability mass tensor $\mathcal{M} \in \mathbb{R}^{n_1 \times \cdots \times n_d}$ maintains the joint probability structure, where $\mathcal{M}_{i_1, \ldots, i_d} = \prod_{k=1}^d m_{i_k}^{X_k}$ due to independence. Let $\mathbf{S}^{(k)}_{i_k}$ and $\mathbf{M}^{(k)}_{i_k}$ be slices of the tensor train cores $\mathcal{S}^{(k)}$ and $\mathcal{M}^{(k)}$ respectively, then both tensors admit tensor train decompositions:
\begin{equation*}
\mathcal{S}_{i_1, \ldots, i_d} = \mathbf{S}^{(1)}_{i_1} \mathbf{S}^{(2)}_{i_2} \cdots \mathbf{S}^{(d)}_{i_d}, \quad
\mathcal{M}_{i_1, \ldots, i_d} = \mathbf{M}^{(1)}_{i_1} \mathbf{M}^{(2)}_{i_2} \cdots \mathbf{M}^{(d)}_{i_d}.
\end{equation*}
As we demonstrate in the following sections, the structure of arithmetic operations leads to low-rank decompositions, with the probability mass tensor always achieving rank 1 due to independence, while the support tensor rank depends on the specific network of operations.

\subsection{Pure Arithmetic Operations}

We first analyze pure operations involving multiple independent variables, which exhibit particularly efficient tensor train structures.

\begin{theorem}[Addition in Tensor Train Format]\label{thm:tt_addition}
Let $X_1, \ldots, X_d$ be independent random variables. The sum $Z = \sum_{k=1}^d X_k$ admits tensor train representations with support tensor rank 2 and probability mass tensor rank 1:
\begin{equation}\label{eq:addition_tt}
\mathcal{S}^{Z} = \mathcal{S}^{(1)} \circ \cdots \circ \mathcal{S}^{(d)}, \quad \mathcal{M}^{Z} = \mathcal{M}^{(1)} \circ \cdots \circ \mathcal{M}^{(d)}
\end{equation}
where the support cores $\mathcal{S}^{(k)}$ have slices:
\footnotesize
\begin{equation*}
\mathbf{S}^{(1)}_{i_1} = [\mathbf{s}^{X_1}_{i_1}, 1] \in \mathbb{R}^{1 \times 2}, \quad
\mathbf{S}^{(k)}_{i_k} = \begin{bmatrix} 1 & 0 \\ \mathbf{s}^{X_k}_{i_k} & 1 \end{bmatrix} \in \mathbb{R}^{2 \times 2} \text{ for } k = 2, \ldots, d-1, \quad
\mathbf{S}^{(d)}_{i_d} = \begin{bmatrix} 1 \\ \mathbf{s}^{X_d}_{i_d} \end{bmatrix} \in \mathbb{R}^{2 \times 1}
\end{equation*}
\normalsize
and the mass cores $\mathcal{M}^{(k)}$ have slices:
\begin{align}
\mathbf{M}^{(k)}_{i_k} &= \mathbf{m}^{X_k}_{i_k} \in \mathbb{R}^{1 \times 1}, \quad k = 1, \ldots, d
\end{align}
\end{theorem}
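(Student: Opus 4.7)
The plan is to verify the claim by directly computing the product of the stated core slices and comparing with the explicit tensor formulas $\mathcal{S}^{Z}_{i_1,\ldots,i_d} = \sum_{k=1}^d s^{X_k}_{i_k}$ and $\mathcal{M}^{Z}_{i_1,\ldots,i_d} = \prod_{k=1}^d m^{X_k}_{i_k}$ established in Section~\ref{sec:arithmetic}. The mass tensor is immediate: by independence, $\mathcal{M}^Z$ is a pure outer product of the vectors $\mathbf{m}^{X_k}$, so declaring each scalar slice $\mathbf{M}^{(k)}_{i_k} = m^{X_k}_{i_k} \in \mathbb{R}^{1\times 1}$ gives $\mathbf{M}^{(1)}_{i_1}\cdots \mathbf{M}^{(d)}_{i_d} = \prod_{k=1}^d m^{X_k}_{i_k}$, matching the target tensor and achieving TT-rank $1$ (necessarily the minimum for a nonzero tensor). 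I would dispatch the mass case in a single sentence.

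For the support tensor the strategy is induction on $d$, exploiting the ``accumulator + constant'' interpretation of the two channels in the core. I would first state and prove the key invariant: for every $1 \le k \le d-1$,
\begin{equation*}
\mathbf{S}^{(1)}_{i_1} \mathbf{S}^{(2)}_{i_2} \cdots \mathbf{S}^{(k)}_{i_k} \;=\; \bigl[\, s^{X_1}_{i_1} + s^{X_2}_{i_2} + \cdots + s^{X_k}_{i_k},\; 1\,\bigr] \;\in\; \mathbb{R}^{1\times 2}.
\end{equation*}
The base case $k=1$ is the definition of $\mathbf{S}^{(1)}_{i_1}$. For the inductive step, the right multiplication $[A,1]\,\bigl[\begin{smallmatrix}1 & 0\\ s & 1\end{smallmatrix}\bigr] = [A+s,\, 1]$ is a one-line computation, which shows that each internal core acts as ``add one more support value into the accumulator while preserving the trailing $1$.'' Finally, multiplying the length-$d{-}1$ partial product by the boundary column $\mathbf{S}^{(d)}_{i_d} = [1,\, s^{X_d}_{i_d}]^\top$ collapses to $\sum_{k=1}^{d} s^{X_k}_{i_k}$, which is exactly the entry of $\mathcal{S}^Z$ prescribed by \eqref{eq:addition_tensor}.

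Regarding the rank claim, the construction exhibits cores of sizes $1\times 2$, $2\times 2$, and $2\times 1$, so the stated decomposition certifies TT-rank at most $2$. To argue that rank $2$ is tight (and not $1$), it suffices to observe that a TT-rank-$1$ tensor is a pure outer product of vectors, whereas $\mathcal{S}^Z$ is a genuine sum of $d$ non-trivial rank-$1$ terms (cf.\ \eqref{eq:addition_tensor}) and hence not separable for any pair of distributions with non-constant supports. I would include this as a brief remark after the main computation.

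There is no real obstacle in this proof; the only thing to handle carefully is the boundary behavior, i.e., making sure that the first and last cores have the correct $1\times 2$ and $2\times 1$ shapes and that the inductive invariant is stated with the trailing $1$ component so that it remains compatible with both the internal $2\times 2$ update and the final column contraction. Once that bookkeeping is clean, the rest is a direct verification.
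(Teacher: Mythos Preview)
Your proposal is correct and follows essentially the same approach as the paper: both rely on the ``running sum'' (accumulator) interpretation of the rank-$2$ cores and on the Kronecker-product structure for the mass tensor. The paper's proof is a brief sketch that cites Oseledets for the running-sum construction, whereas you spell out the one-line induction $[A,1]\bigl[\begin{smallmatrix}1&0\\ s&1\end{smallmatrix}\bigr]=[A+s,1]$ explicitly and add a (non-essential) remark on tightness of the rank; this is more self-contained but not a genuinely different route.
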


\begin{proof}
The support tensor construction follows from the tensor train addition algorithm 
\cite{oseledets2011tensor}, where we encode the accumulation $\sum_{k=1}^d \mathbf{s}^{X_k}_{i_k}$ 
using augmented cores with rank 2 that implement a running sum mechanism. The probability mass
tensor maintains rank 1 since $\mathcal{M}^{Z}_{i_1,\ldots,i_d} = \prod_{k=1}^d \mathbf{m}^{X_k}_{i_k}$ 
forms a Kronecker product structure.
\end{proof}

\begin{figure}[htbp]
\centering
\begin{subfigure}{0.48\textwidth}
    \centering
    \includegraphics[width=\textwidth]{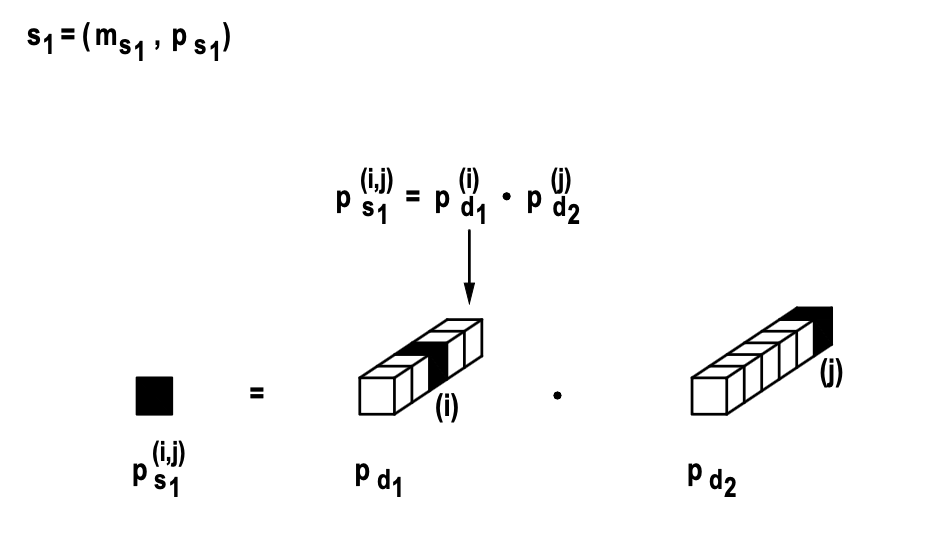}
\end{subfigure}
\hfill
\begin{subfigure}{0.48\textwidth}
    \centering
    \includegraphics[width=\textwidth]{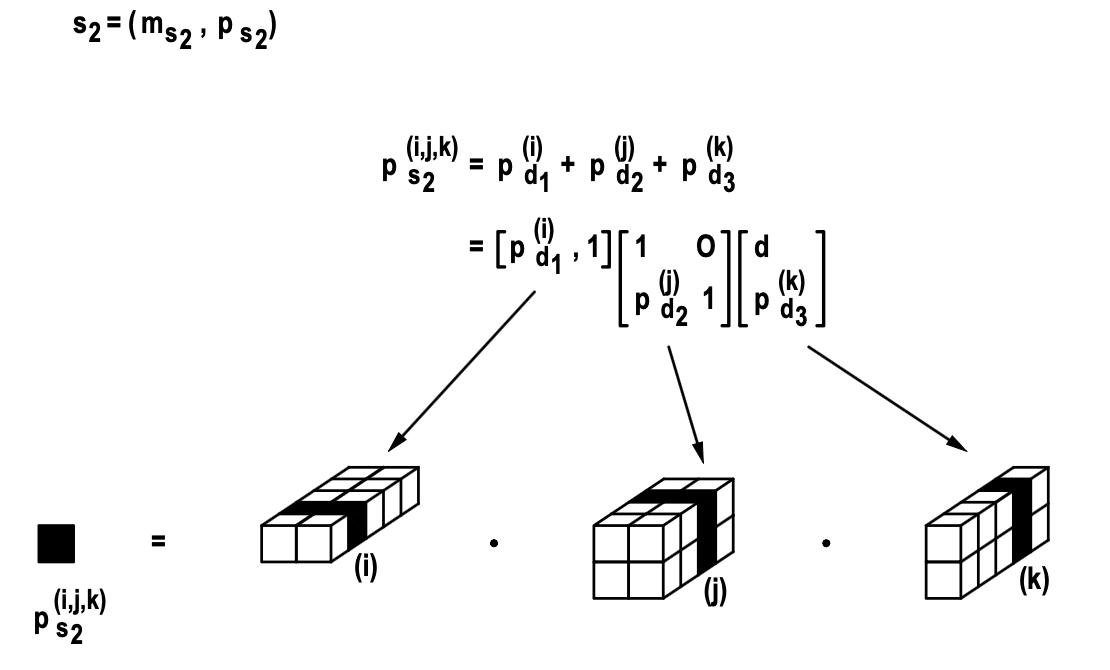}
\end{subfigure}
\caption{Tensor train decompositions: (left) applying Theorem~\ref{thm:tt_multiplication} to the multiplication in Figure~\ref{fig:randomMatrix}, left; (right) applying Theorem~\ref{thm:tt_addition} to the addition in Figure~\ref{fig:randomMatrix}, right.}
\label{fig:randomTT}
\end{figure}

\begin{theorem}[Multiplication in Tensor Train Format]\label{thm:tt_multiplication}
For the product $W = \prod_{k=1}^d X_k$, both support and probability mass tensors achieve rank 1 with cores having slices:
\begin{equation}
\mathbf{S}^{(k)}_{i_k} = \mathbf{s}^{X_k}_{i_k}, \quad \mathbf{M}^{(k)}_{i_k} = \mathbf{m}^{X_k}_{i_k} \in \mathbb{R}^{1 \times 1}, \quad k = 1, \ldots, d
\end{equation}
\end{theorem}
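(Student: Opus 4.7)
The plan is to recognize that this theorem is essentially an immediate consequence of the rank-1 tensor structure already established for the multiplication operation in Section~\ref{sec:arithmetic}, together with the observation that any Kronecker product of vectors admits a trivial TT decomposition with all ranks equal to 1.

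First, I would invoke equations~\eqref{eq:joint_mass_tensor} and~\eqref{eq:multiplication_tensor} from Section~\ref{sec:arithmetic}, which state that for the product $W = \prod_{k=1}^d X_k$ of $d$ independent variables, the support and mass tensors factor as pure Kronecker products,
\begin{equation*}
\mathcal{S}^{W} = \mathbf{s}^{X_1} \otimes \cdots \otimes \mathbf{s}^{X_d}, \qquad \mathcal{M}^{W} = \mathbf{m}^{X_1} \otimes \cdots \otimes \mathbf{m}^{X_d}.
\end{equation*}
Entrywise, this means $\mathcal{S}^{W}_{i_1,\ldots,i_d} = \prod_{k=1}^d \mathbf{s}^{X_k}_{i_k}$ and $\mathcal{M}^{W}_{i_1,\ldots,i_d} = \prod_{k=1}^d \mathbf{m}^{X_k}_{i_k}$, the latter following directly from the independence of the $X_k$.

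Next, I would verify that the claimed TT cores reproduce these entries. With the prescribed $1\times 1$ slices $\mathbf{S}^{(k)}_{i_k} = \mathbf{s}^{X_k}_{i_k}$ and $\mathbf{M}^{(k)}_{i_k} = \mathbf{m}^{X_k}_{i_k}$, the TT contraction in \eqref{eq:tt_decomposition} becomes an ordinary scalar product,
\begin{equation*}
\mathbf{S}^{(1)}_{i_1}\mathbf{S}^{(2)}_{i_2}\cdots \mathbf{S}^{(d)}_{i_d} = \prod_{k=1}^{d} \mathbf{s}^{X_k}_{i_k} = \mathcal{S}^{W}_{i_1,\ldots,i_d},
\end{equation*}
and analogously for the mass cores. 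All intermediate ranks satisfy $r_k = 1$, consistent with the boundary condition $r_0 = r_d = 1$, so this is a valid TT decomposition with maximal rank~$1$.

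Finally, I would briefly remark on optimality: since both $\mathcal{S}^{W}$ and $\mathcal{M}^{W}$ are nonzero rank-1 tensors (each is a Kronecker product of nonzero vectors whenever all $\mathbf{s}^{X_k}$ and $\mathbf{m}^{X_k}$ are nonzero), rank 1 is the minimum achievable TT-rank, matching the lower bound from the unfolding-rank characterization of TT-ranks \cite{oseledets2011tensor}. I do not anticipate a significant obstacle here: the result is structurally immediate from the Kronecker factorization in Section~\ref{sec:arithmetic}, and the only bookkeeping is confirming that scalar multiplications along the TT chain reproduce the elementwise product, which is precisely what the rank-1 format encodes.
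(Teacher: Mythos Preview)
Your proposal is correct and follows essentially the same approach as the paper: both arguments observe that the support and mass tensors are pure Kronecker products with entries $\prod_k \mathbf{s}^{X_k}_{i_k}$ and $\prod_k \mathbf{m}^{X_k}_{i_k}$, and that such tensors are exactly what a chain of $1\times 1$ TT cores produces. Your write-up is simply more explicit (and adds an optimality remark the paper omits), but the underlying idea is identical.
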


\begin{proof}
Both tensors follow identical Kronecker product structures. Each entry becomes $\mathcal{S}^{W}_{i_1,\ldots,i_d} = \prod_{k=1}^d \mathbf{s}^{X_k}_{i_k}$ and $\mathcal{M}^{W}_{i_1,\ldots,i_d} = \prod_{k=1}^d \mathbf{m}^{X_k}_{i_k}$, both achievable through rank-1 tensor train contractions.
\end{proof}

We now generalize to arbitrary arithmetic expressions combining both operations.

\subsection{General Arithmetic Operations}

Pure addition or multiplication operations maintain low tensor train ranks: rank 2 for addition and rank 1 for multiplication, regardless of the number of variables involved. This property enables efficient construction of tensor train decompositions for networks composed entirely of one operation type, as detailed in Section \ref{sec:algorithm}. For general arithmetic expressions mixing different operations, or when combining variables that themselves result from mixed operations, we require the following more general construction based on \cite{oseledets2011tensor}.

\begin{theorem}[General Tensor Train Arithmetic]\label{thm:tt_general}
Let $X$ and $Y$ be random variables whose distributions are represented by tensors $\mathcal{X}, \mathcal{Y} \in \mathbb{R}^{n_1 \times \cdots \times n_d}$ with tensor train decompositions:
\begin{equation}
\mathcal{X}_{i_1,\ldots,i_d} = \mathbf{X}^{(1)}_{i_1} \cdots \mathbf{X}^{(d)}_{i_d}, \quad \mathcal{Y}_{i_1,\ldots,i_d} = \mathbf{Y}^{(1)}_{i_1} \cdots \mathbf{Y}^{(d)}_{i_d},
\end{equation}
where both tensors have dimensions $n_1 \times \cdots \times n_d$ with maximum ranks $r_{X}$ and $r_{Y}$ respectively. Arithmetic operations preserve the tensor train structure; addition ($Z = X + Y$) merges cores with rank $r_{Z} \leq r_{X} + r_{Y}$:
\footnotesize
\begin{equation}\label{eq:addition_cores}
\mathbf{Z}^{(k)}_{i_k} = \begin{cases}
\begin{bmatrix} \mathbf{X}^{(1)}_{i_1} & \mathbf{Y}^{(1)}_{i_1} \end{bmatrix} & k = 1 \\[1em]
\begin{bmatrix} \mathbf{X}^{(k)}_{i_k} & \mathbf{0} \\ \mathbf{0} & \mathbf{Y}^{(k)}_{i_k} \end{bmatrix} & k = 2, \ldots, d-1 \\[1em]
\begin{bmatrix} \mathbf{X}^{(d)}_{i_d} \\ \mathbf{Y}^{(d)}_{i_d} \end{bmatrix} & k = d,
\end{cases}
\end{equation}
\normalsize
whereas multiplication ($W = X \cdot Y$) combines cores via Kronecker products with rank $r_{W} \leq r_{X} \cdot r_{Y}$:
\begin{equation}\label{eq:multiplication_cores}
\mathbf{W}^{(k)}_{i_k} = \mathbf{X}^{(k)}_{i_k} \otimes \mathbf{Y}^{(k)}_{i_k}, \quad k = 1, \ldots, d.
\end{equation}
\end{theorem}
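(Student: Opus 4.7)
My plan is to verify the two core constructions by direct matrix-algebraic calculation, recognizing them as the well-known TT addition and TT Hadamard product formulas of \cite{oseledets2011tensor} transported into the probabilistic setting of this paper. The first preparatory observation I would make is that when $\mathcal{X}$ and $\mathcal{Y}$ share the same joint indexing of underlying latent variables, the support tensor of $Z=X+Y$ is exactly the entry-wise sum $\mathcal{X}+\mathcal{Y}$ and the support tensor of $W=XY$ is the Hadamard product $\mathcal{X}\odot\mathcal{Y}$, while the common mass tensor is unaffected. Hence the task reduces to showing that the proposed cores realize these entry-wise operations, with the rank bounds falling out as straightforward dimension counts.

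For the addition formula, I would proceed by induction on $k$ to show that the partial core product collapses to the row-block vector $\begin{bmatrix} \mathbf{X}^{(1)}_{i_1}\cdots\mathbf{X}^{(k)}_{i_k} & \mathbf{Y}^{(1)}_{i_1}\cdots\mathbf{Y}^{(k)}_{i_k} \end{bmatrix}$ for each $1\leq k<d$. The base case reads off the first core by definition; the inductive step is a one-line block-matrix multiplication against the block-diagonal interior core, which leaves the two parallel running products decoupled. Contracting against the final column-stacked core then collapses the row vector to $\mathcal{X}_{i_1,\ldots,i_d}+\mathcal{Y}_{i_1,\ldots,i_d}$, and the rank bound $r_Z \leq r_X+r_Y$ is immediate from the $(r_{X,k}+r_{Y,k})\times(r_{X,k+1}+r_{Y,k+1})$ shape of each interior block.

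For the multiplication formula, the main tool is the Kronecker mixed-product identity $(A\otimes B)(C\otimes D)=(AC)\otimes(BD)$, which I would apply $d-1$ times to obtain $\mathbf{W}^{(1)}_{i_1}\cdots\mathbf{W}^{(d)}_{i_d} = \bigl(\mathbf{X}^{(1)}_{i_1}\cdots\mathbf{X}^{(d)}_{i_d}\bigr)\otimes\bigl(\mathbf{Y}^{(1)}_{i_1}\cdots\mathbf{Y}^{(d)}_{i_d}\bigr)$. Both full TT products are scalars because $r_0=r_d=1$, so the outer Kronecker product degenerates to ordinary scalar multiplication, recovering $\mathcal{X}_{i_1,\ldots,i_d}\cdot\mathcal{Y}_{i_1,\ldots,i_d}$, and the bound $r_W\leq r_X\cdot r_Y$ is again a direct dimension count on the $r_{X,k}r_{Y,k}\times r_{X,k+1}r_{Y,k+1}$ Kronecker-product cores. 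The main obstacle here is bookkeeping rather than any substantive algebraic step: the boundary cores at $k=1$ and $k=d$ differ in shape from the interior pattern, and one must carefully check that the row-concatenation at the start and the column-stacking (respectively, scalar Kronecker collapse) at the end correctly initialize and terminate the telescoping. Since no truncation enters, the stated bounds are exact upper bounds on the resulting TT ranks, which may be tightened subsequently by TT rounding if further compression is desired.
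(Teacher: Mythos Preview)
Your proposal is correct and takes essentially the same approach as the paper, which simply cites \cite[Section~4]{oseledets2011tensor} for both constructions. Your sketch in fact supplies more detail than the paper's one-line proof, spelling out the block-matrix induction for addition and the mixed-product Kronecker identity for the Hadamard product that underlie Oseledets' original arguments.
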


\begin{proof}
The results follow directly from \cite[Section 4]{oseledets2011tensor}.
\end{proof}
The tensor train arithmetic operations above require that both operands have identical tensor dimensions $\left(n_1, n_2, \cdots, n_d\right)$. This constraint arises because element-wise operations demand matching indices across all modes. However, in practice, random variables often depend on different subsets of underlying factors or arise from distinct computational paths. The following section presents our correlation tracking framework, which identifies shared dependencies between variables and enables operations on random variables with different dimensional structures by aligning their tensor representations.

\subsection{Keeping Track of Correlations Between Variables}

Real-world computations involve variables with different dependencies. Consider $Z = X + Y$ where $X$ depends on $(T, H, P)$ (temperature, humidity, pressure) and $Y$ depends on $(T, W, S)$ (temperature, wind, solar radiation). Their tensor representations occupy different spaces: $\mathcal{X} \in \mathbb{R}^{n_T \times n_H \times n_P}$ and $\mathcal{Y} \in \mathbb{R}^{n_T \times n_W \times n_S}$. The result $Z$ depends on all five factors, requiring representation in $\mathbb{R}^{n_T \times n_H \times n_P \times n_W \times n_S}$.

The tensor train decomposition exposes this dependency structure through cores $\mathbf{X}^{(k)}$ corresponding to specific ancestral variables. We handle shared factors by embedding both operands in the full space and inserting identity cores for absent dimensions.

For $X = T + H + P$ and $Y = T \times W \times S$, following Theorems~\ref{thm:tt_addition} and \ref{thm:tt_multiplication}, the support tensor $\mathcal{X}$ has rank-2 cores while $\mathcal{Y}$ has rank-1 cores. Let $\mathcal{I}^{(k)}$ and $\mathcal{J}^{(k)}$ denote appropriately sized identity cores (with slices $\mathbf{I}^{(k)}_{i}$ and $\mathbf{J}^{(k)}_{j}$ respectively), then to compute $Z = X \star Y$, we embed both tensors in the unified space $(T, H, P, W, S)$:
\begin{equation}
\widetilde{\mathcal{X}}_{i_T, i_H, i_P, i_W, i_S} = \mathbf{X}^{(T)}_{i_T} \mathbf{X}^{(H)}_{i_H} \mathbf{X}^{(P)}_{i_P} \mathbf{I}^{(W)}_{i_W} \mathbf{I}^{(S)}_{i_S}
\end{equation}
\begin{equation}
\widetilde{\mathcal{Y}}_{i_T, i_H, i_P, i_W, i_S} = \mathbf{Y}^{(T)}_{i_T} \mathbf{J}^{(H)}_{i_H} \mathbf{J}^{(P)}_{i_P} \mathbf{Y}^{(W)}_{i_W} \mathbf{Y}^{(S)}_{i_S}.
\end{equation}

\begin{theorem}[Tensor Train Alignment with Correlation Preservation]\label{thm:tt_alignment}
Let $\mathcal{X} \in \mathbb{R}^{\prod_{k \in \mathcal{I}_X} n_k}$ and $\mathcal{Y} \in \mathbb{R}^{\prod_{k \in \mathcal{I}_Y} n_k}$ be tensors with tensor train decompositions over index sets $\mathcal{I}_X$ and $\mathcal{I}_Y$. Define $\mathcal{I} = \mathcal{I}_X \cup \mathcal{I}_Y$ with ordering $\prec$. 

The aligned tensor train representations operate in $\mathbb{R}^{\prod_{k \in \mathcal{I}} n_k}$ via:
\begin{equation}
\widetilde{\mathcal{X}}^{(k)} = \begin{cases}
\mathcal{X}^{(k)} & k \in \mathcal{I}_X \\
\mathcal{I}_{r_{\prec k} \times r_{\succ k}} & k \in \mathcal{I} \setminus \mathcal{I}_X
\end{cases}
\end{equation}
where $r_{\prec k}$ and $r_{\succ k}$ denote ranks from adjacent cores in $\mathcal{I}_X$. The aligned tensors preserve original values on respective subspaces and handle joint distributions over shared variables $k \in \mathcal{I}_X \cap \mathcal{I}_Y$ while maintaining independence of disjoint variables.
\end{theorem}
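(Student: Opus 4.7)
The core claim is that the aligned decomposition $\widetilde{\mathcal{X}}$ yields a tensor whose value at an index tuple $(i_k)_{k \in \mathcal{I}}$ equals the original $\mathcal{X}$ evaluated at $(i_k)_{k \in \mathcal{I}_X}$, independent of the indices $k \in \mathcal{I} \setminus \mathcal{I}_X$, and symmetrically for $\widetilde{\mathcal{Y}}$. I would prove this by direct index-wise verification of the tensor train contraction \eqref{eq:tt_decomposition}, then combine with Theorem~\ref{thm:tt_general} to conclude the correlation properties.

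\textbf{Step 1 (Rank compatibility of inserted cores).} For each $k \in \mathcal{I} \setminus \mathcal{I}_X$, I would first observe that the scalars $r_{\prec k}$ and $r_{\succ k}$ must coincide: they both equal the TT-rank that bridges the two cores of $\mathcal{X}$ that are the nearest predecessor and nearest successor of $k$ under $\prec$ (or $1$ if $k$ precedes/succeeds all of $\mathcal{I}_X$). Hence the inserted slice $\widetilde{\mathbf{X}}^{(k)}_{i_k} = \mathbf{I}_{r_{\prec k}}$ is a well-defined square identity matrix of the correct connecting rank. When several consecutive positions lie in $\mathcal{I} \setminus \mathcal{I}_X$, all inserted slices share the same square dimension, so their product remains the identity.

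\textbf{Step 2 (Value preservation).} I would then evaluate
\begin{equation*}
\widetilde{\mathcal{X}}_{(i_k)_{k \in \mathcal{I}}} \;=\; \prod_{k \in \mathcal{I}}^{\prec} \widetilde{\mathbf{X}}^{(k)}_{i_k}
\end{equation*}
and regroup the matrix product so that each original core $\mathbf{X}^{(k)}_{i_k}$, $k \in \mathcal{I}_X$, is followed by a (possibly empty) block of identity slices before the next $\mathcal{I}_X$-core. Since identity factors drop out, the product collapses to $\prod_{k \in \mathcal{I}_X}^{\prec} \mathbf{X}^{(k)}_{i_k} = \mathcal{X}_{(i_k)_{k \in \mathcal{I}_X}}$. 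In particular, $\widetilde{\mathcal{X}}$ is constant along every mode in $\mathcal{I} \setminus \mathcal{I}_X$, realising the marginal-independence requirement. The same argument applied to $\widetilde{\mathcal{Y}}$ handles the symmetric case.

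\textbf{Step 3 (Correlation and independence).} With both operands expressed over the common space $\mathbb{R}^{\prod_{k \in \mathcal{I}} n_k}$, Theorem~\ref{thm:tt_general} applies directly. For a shared mode $k \in \mathcal{I}_X \cap \mathcal{I}_Y$, the combined core at $k$ is assembled from $\mathbf{X}^{(k)}_{i_k}$ and $\mathbf{Y}^{(k)}_{i_k}$ using the same index $i_k$, so the joint dependence of $\mathcal{X}$ and $\mathcal{Y}$ on this factor is preserved (no diagonal is collapsed). For a mode $k \in \mathcal{I}_X \setminus \mathcal{I}_Y$ the combined core pairs $\mathbf{X}^{(k)}_{i_k}$ with an identity slice on the $\mathcal{Y}$-side, so the dependence along $i_k$ comes solely from $\mathcal{X}$, and analogously for $\mathcal{I}_Y \setminus \mathcal{I}_X$. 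This recovers independence of disjoint factors.

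\textbf{Expected obstacle.} The routine work is bookkeeping, but the delicate point is in Step 1: the notation $r_{\prec k}$, $r_{\succ k}$ and the ordering $\prec$ must be set up carefully so that inserted identity slices are simultaneously (i) square, (ii) of the correct dimension to match the neighbouring $\mathcal{X}$-cores (respectively $\mathcal{Y}$-cores), and (iii) consistent across intervals containing multiple missing indices. Once that is nailed down unambiguously, the rest of the argument is a straightforward reduction of the aligned TT contraction to the original one, followed by an invocation of Theorem~\ref{thm:tt_general}.
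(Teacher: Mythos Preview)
Your proposal is correct and follows essentially the same approach as the paper: direct index-wise evaluation of the aligned TT contraction, observing that the inserted identity slices drop out to recover the original tensor values. Your version is in fact more careful than the paper's own proof---you explicitly address the rank-compatibility issue for the inserted identities (Step~1) and the correlation/independence claims via Theorem~\ref{thm:tt_general} (Step~3), whereas the paper's proof only writes down the value-preservation computation and leaves those points implicit.
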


\begin{proof}
For index $(i_k)_{k \in \mathcal{I}}$ in the unified space, tensor train contraction yields:
\begin{equation}
\widetilde{\mathcal{X}}_{(i_k)_{k \in \mathcal{I}}} = \prod_{k \in \mathcal{I}_X} \mathbf{X}^{(k)}_{i_k} \prod_{k \in \mathcal{I} \setminus \mathcal{I}_X} \mathbf{I}^{(k)}_{i_k}
\end{equation}
Since identity cores satisfy $\mathbf{I}^{(k)}_{i_k} = \mathbf{I}$ and contraction preserves the product:
\begin{equation}
\widetilde{\mathcal{X}}_{(i_k)_{k \in \mathcal{I}}} = \prod_{k \in \mathcal{I}_X} \mathbf{X}^{(k)}_{i_k} = \mathcal{X}_{(i_k)_{k \in \mathcal{I}_X}}.
\end{equation}
\end{proof}
\begin{remark}
Consecutive identity cores $\mathbf{I}^{(k)}, \ldots, \mathbf{I}^{(k+m)}$ allow intermediate ranks of 1, requiring only boundary matching: $\mathbf{I}^{(k)}_{r_{\prec k} \times 1} \circ \mathbf{I}^{(k+1)}_{1 \times 1} \circ \cdots \circ \mathbf{I}^{(k+m)}_{1 \times r_{\succ(k+m)}}$.
\end{remark}

\subsection{Efficient Statistical Computations}

Computing statistical properties of discrete representations of probability distributions typically requires summing over exponentially many support points \cite{bilgin_patent}. For distributions with support tensor $\mathcal{S}$ and mass tensor $\mathcal{M}$ in tensor train format, we reduce these computations to multidimensional contractions. The tensor train structure enables operations in $\mathcal{O}(dnr^3)$ time rather than $\mathcal{O}(n^d)$, making high-dimensional statistical analysis computationally feasible \cite{oseledets2011tensor}.

\begin{theorem}[Efficient Moment Computation]\label{thm:moments}
Let $X$ denote a random variable with discrete representation having support tensor $\mathcal{S}$ and probability mass tensor $\mathcal{M}$ in tensor train format. The $k$-th moment approximation is computed as:
\begin{equation}\label{eq:moment_computation}
\mu_k^{\text{disc}} = \frac{\langle \mathcal{S}^{\odot k}, \mathcal{M} \rangle}{\langle \mathbf{1}, \mathcal{M} \rangle}
\end{equation}
where $\mathcal{S}^{\odot k}$ denotes the $k$-th Hadamard power and $\langle \cdot, \cdot \rangle$ the tensor train inner product, with complexity $\mathcal{O}(dnr^3)$.
\end{theorem}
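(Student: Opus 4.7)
The plan is to split the proof into two parts: an algebraic identity that validates the formula, and a complexity argument based on standard tensor train contraction.

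I would first unpack the definition of the $k$-th moment of a finite discrete distribution, $\mu_k = (\sum_j x_j^k m_j)/(\sum_j m_j)$. In the tensor format each multi-index $(i_1,\ldots,i_d)$ labels a support point $x_j = \mathcal{S}_{i_1,\ldots,i_d}$ with mass $m_j = \mathcal{M}_{i_1,\ldots,i_d}$, so the numerator becomes $\sum_{i_1,\ldots,i_d} \mathcal{S}_{i_1,\ldots,i_d}^{\,k}\, \mathcal{M}_{i_1,\ldots,i_d}$. By definition of the Hadamard power $(\mathcal{S}^{\odot k})_{i_1,\ldots,i_d} = \mathcal{S}_{i_1,\ldots,i_d}^{\,k}$, and by definition of the tensor inner product the sum equals $\langle \mathcal{S}^{\odot k}, \mathcal{M}\rangle$. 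The denominator $\sum \mathcal{M}_{i_1,\ldots,i_d} = \langle \mathbf{1}, \mathcal{M}\rangle$ normalizes against any deviation of the total mass from unity, which is useful since repeated arithmetic operations in finite precision can produce mass tensors that no longer sum exactly to $1$.

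Next I would construct $\mathcal{S}^{\odot k}$ in TT format using the Hadamard product rule from \cite{oseledets2011tensor}: the cores are $(\mathcal{S}^{\odot k})^{(l)}_{i_l} = (\mathbf{S}^{(l)}_{i_l})^{\otimes k}$, which multiplies the TT-rank to at most $r_\mathcal{S}^{\,k}$. For the pure-product case of Theorem \ref{thm:tt_multiplication} the rank stays at $1$; for the pure-addition case of Theorem \ref{thm:tt_addition} one would rely on intermediate TT-rounding to control growth. Throughout the remainder of the proof I would let $r$ denote the maximum TT-rank appearing in the intermediate tensors (including $\mathcal{S}^{\odot k}$ and $\mathcal{M}$), matching the convention used elsewhere in the paper. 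For the contraction itself I would then invoke the standard left-to-right sweep for TT inner products: maintain a running accumulator $\mathbf{v}_l \in \mathbb{R}^{r^A_l r^B_l}$ initialized as $\mathbf{v}_0 = 1$ and update by $\mathbf{v}_l = \sum_{i_l} \mathbf{v}_{l-1}^{\top}\,(\mathbf{A}^{(l)}_{i_l} \otimes \mathbf{B}^{(l)}_{i_l})$, where $\mathbf{A},\mathbf{B}$ are the cores of the two operands. Each core update costs $\mathcal{O}(n r_A r_B (r_A + r_B))$, and summing over $d$ cores gives $\mathcal{O}(dnr^3)$ whenever both ranks are bounded by $r$. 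I would apply this sweep separately to $\langle \mathcal{S}^{\odot k}, \mathcal{M}\rangle$ and to $\langle \mathbf{1}, \mathcal{M}\rangle$ (using the trivial rank-$1$ all-ones TT), and then form the scalar quotient at negligible extra cost.

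The main obstacle is cleanly absorbing the $k$-dependence into the bound. The Hadamard power can in principle inflate the TT-rank from $r_\mathcal{S}$ to $r_\mathcal{S}^{\,k}$, so the $\mathcal{O}(dnr^3)$ claim is only informative once one interprets $r$ as the maximum rank of the intermediate TT tensors rather than of $\mathcal{S}$ itself. I would therefore include a short remark making this interpretation explicit and flag the regimes (rank-$1$ support tensors, or low-rank tensors followed by TT-rounding) in which the Hadamard ranks remain small enough for the polynomial-time claim to be meaningful in practice.
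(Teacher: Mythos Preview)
Your proposal is correct and follows the same route as the paper, which simply cites the definition of moments for Dirac mixtures together with the standard $\mathcal{O}(dnr^3)$ complexity of tensor train inner products from \cite{oseledets2011tensor}; you have merely unpacked those citations into explicit steps. Your closing caveat about interpreting $r$ as the maximum rank of the intermediate tensors (since the Hadamard power can inflate the rank of $\mathcal{S}$ to $r_{\mathcal{S}}^{\,k}$) matches the paper's own remark immediately following the theorem.
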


\begin{proof}
The result follows directly from the definition of moments for random variables represented using Dirac deltas \cite{bilgin_patent, tsoutsouras2021laplace}, and the definition and complexity of tensor train inner products \cite{oseledets2011tensor}.
\end{proof}

Computing $\mathcal{S}^{\odot k}$ can increase ranks up to $r^k$, though actual growth depends on the structure of $\mathcal{S}$. Thus the effective rank in the $\mathcal{O}(dnr^3)$ expression may be significantly larger than the original. Section~\ref{sec:algorithm} addresses this through an efficient implementation strategy.

\begin{theorem}[Efficient Covariance Computation]\label{thm:covariance}
Let $X$ and $Y$ be random variables represented in tensor train format, with $Z = XY$ computed via Theorem~\ref{thm:tt_general}. If $\mathcal{S}_Z$ and $\mathcal{M}_Z$ represent the support and mass tensors of $Z$, then:
\begin{equation}
\text{Cov}^{\text{disc}}(X,Y) = \frac{\langle \mathcal{S}_Z, \mathcal{M}_Z \rangle}{\langle \mathbf{1}, \mathcal{M}_Z \rangle} - \mu_X^{\text{disc}} \mu_Y^{\text{disc}}
\end{equation}
where $\mu_X^{\text{disc}}$ and $\mu_Y^{\text{disc}}$ are computed via Theorem~\ref{thm:moments}.
\end{theorem}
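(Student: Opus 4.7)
The plan is to reduce the claim to the identity $\text{Cov}(X,Y)=\mathbb{E}[XY]-\mathbb{E}[X]\mathbb{E}[Y]$ and then invoke Theorem~\ref{thm:moments} three times: once on $Z=XY$ to obtain the first summand, and once each on $X$ and $Y$ to obtain the two factors in the subtrahend. Because Theorem~\ref{thm:moments} already bundles the definition of discrete moments via Dirac mixtures together with the tensor train inner product, the remaining work is purely the verification that the tensor train representation of $Z$ produced by Theorem~\ref{thm:tt_general} is consistent with the Dirac mixture for the product random variable $XY$.

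First, I would apply Theorem~\ref{thm:tt_alignment} to embed $\mathcal{S}_X, \mathcal{S}_Y$ (and the corresponding mass tensors $\mathcal{M}_X, \mathcal{M}_Y$) into the shared index space $\mathcal{I}_X\cup\mathcal{I}_Y$. Next, I would invoke Theorem~\ref{thm:tt_general} with the Kronecker-product core recipe in \eqref{eq:multiplication_cores}, and verify by direct TT contraction that the resulting support tensor satisfies $(\mathcal{S}_Z)_{I}=(\widetilde{\mathcal{S}}_X)_I\,(\widetilde{\mathcal{S}}_Y)_I$ for every multi-index $I$ in the unified space, which is precisely the Dirac support value of $XY$ at the joint outcome indexed by $I$. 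The mass tensor $\mathcal{M}_Z$ is the joint mass tensor over all ancestral variables in $\mathcal{I}_X\cup\mathcal{I}_Y$, i.e.\ the Kronecker product of the independent ancestral masses; this gives $\langle \mathbf{1},\mathcal{M}_Z\rangle=1$ and makes $\langle\mathcal{S}_Z,\mathcal{M}_Z\rangle$ the Dirac-mixture expectation of $XY$.

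With these alignments in place, Theorem~\ref{thm:moments} applied to $Z$ with $k=1$ yields $\mu_1^{Z,\text{disc}}=\langle\mathcal{S}_Z,\mathcal{M}_Z\rangle/\langle\mathbf{1},\mathcal{M}_Z\rangle=\mathbb{E}^{\text{disc}}[XY]$, and applied to $X$ and $Y$ it yields $\mu_X^{\text{disc}}$ and $\mu_Y^{\text{disc}}$. Substituting into $\text{Cov}^{\text{disc}}(X,Y)=\mathbb{E}^{\text{disc}}[XY]-\mathbb{E}^{\text{disc}}[X]\,\mathbb{E}^{\text{disc}}[Y]$ gives the claimed formula.

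The main obstacle, and the step deserving the most care, is the bookkeeping around shared ancestral variables: naively taking $(\mathcal{M}_Z)_I=(\widetilde{\mathcal{M}}_X)_I(\widetilde{\mathcal{M}}_Y)_I$ would double-count masses on indices $k\in\mathcal{I}_X\cap\mathcal{I}_Y$. The fix is to define $\mathcal{M}_Z$ as the joint mass tensor on $\mathcal{I}_X\cup\mathcal{I}_Y$ using each ancestral variable's marginal exactly once, which is consistent with the alignment convention of Theorem~\ref{thm:tt_alignment} (identity cores for absent dimensions, original mass cores on present dimensions, shared cores counted once). Once this convention is stated explicitly, the inner product $\langle\mathcal{S}_Z,\mathcal{M}_Z\rangle$ reproduces the Dirac-mixture expectation of $XY$ and the rest of the argument is a one-line invocation of Theorem~\ref{thm:moments}.
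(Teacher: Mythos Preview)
Your proposal is correct and follows essentially the same route as the paper: reduce to $\text{Cov}(X,Y)=\mathbb{E}[XY]-\mathbb{E}[X]\mathbb{E}[Y]$, form $Z=XY$ via Theorem~\ref{thm:tt_general} with alignment handling shared ancestors, and invoke Theorem~\ref{thm:moments} for the first moment of $Z$ and the means of $X,Y$. Your treatment is in fact more careful than the paper's, which dispatches the shared-ancestor mass-tensor issue with a single clause (``automatically handling shared dependencies through alignment''), whereas you explicitly flag and resolve the potential double-counting of masses on $\mathcal{I}_X\cap\mathcal{I}_Y$.
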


\begin{proof}
By definition, $\text{Cov}(X,Y) = \mathbb{E}[XY] - \mathbb{E}[X]\mathbb{E}[Y]$. Theorem~\ref{thm:tt_general} computes $Z = XY$, automatically handling shared dependencies through alignment. The first term equals the first moment of $Z$ computed as $\langle \mathcal{S}_Z, \mathcal{M}_Z \rangle / \langle \mathbf{1}, \mathcal{M}_Z \rangle$. All operations maintain $\mathcal{O}(dnr^3)$ complexity through the tensor train structure.
\end{proof}

\section{An Efficient Implementation Strategy}\label{sec:algorithm}

The tensor train cores from Section~\ref{sec:tensor_train} contain substantial structure that we exploit for efficiency. In particular, the cores from Theorem~\ref{thm:tt_general} contain many zero and one entries due to block-stacking and Kronecker product operations, making explicit storage wasteful.

The implementation strategy we present directly supports hardware realization. Each optimization we develop corresponds to specific microarchitectural features that enable high-performance distribution arithmetic to be implemented in a hardware accelerator or as part of the microarchitecture of a future general-purpose microprocessor.

\subsection{Sparse Representation of Tensor Train Cores}
\label{subsec:sparsecore}

We introduce a sparse representation that tracks only essential information:

\begin{definition}[Sparse Core Representation]
\label{def:sparse_core}
Let $(i,\ell)$ denote a matrix position, $\alpha$ the power of the support vector, and $c$ the coefficient. For a tensor train core $\mathcal{G}^{(k)}$ corresponding to ancestral variable $k$, we store:
\begin{equation}
\mathcal{G}^{(k)}_{\text{sparse}} = \{(i,\ell, \alpha, c) : \mathbf{G}^{(k)}_{i_k}[i,\ell] = c \cdot (\mathbf{s}^{(k)}_{i_k})^\alpha \neq 0\}.
\end{equation}
\end{definition}
For example, consider a rank-2 core slice from pure addition (Theorem~\ref{thm:tt_addition}):
$$
\mathbf{G}^{(k)}_{i_k} = \begin{bmatrix} 1 & 0 \\ 7\mathbf{s}^{(k)}_{i_k} & 1 \end{bmatrix}.
$$
Our sparse representation stores only $\{(1,1,0,1), (2,1,1,7), (2,2,0,1)\}$, indicating that position $(1,1)$ contains $1 \cdot (\mathbf{s}^{(k)}_{i_k})^0 = 1$, position $(2,1)$ contains $7 \cdot (\mathbf{s}^{(k)}_{i_k})^1$, and position $(2,2)$ contains $1$. The zero entry is not stored.

This representation reduces memory from $\mathcal{O}(r_{k-1} \cdot n_k \cdot r_k)$ to $\mathcal{O}(\text{\textit{nnz}})$ per core, where \textit{nnz} denotes the number of structurally nonzero entries per slice (not the entire tensor). The arithmetic operations from Theorem~\ref{thm:tt_general} naturally map to this representation through index transformations and coefficient/power accumulations.

\subsection{Tracking Pure Arithmetic Operations}
\label{subsec:operations}

The sparse representation becomes particularly effective when tracking the arithmetic heritage of each tensor train. Variables from pure operations exhibit predictable low-rank cores that we exploit (Theorems~\ref{thm:tt_addition} and~\ref{thm:tt_multiplication}).

\begin{definition}[Operation Type Tracking]
\label{def:op_tracking}
For each tensor train representation, we maintain an operation type flag $\text{OpType} \in \{\texttt{ADD}, \texttt{MULT}, \texttt{MIXED}\}$ with corresponding sparse pattern guarantees: \texttt{ADD} operations yield core slices with at most 3 nonzero entries in a $2 \times 2$ pattern (Theorem~\ref{thm:tt_addition}), \texttt{MULT} operations yield exactly 1 nonzero entry per slice (Theorem~\ref{thm:tt_multiplication}), and \texttt{MIXED} operations have no structural guarantees and require general rank propagation (~\ref{thm:tt_general}).
\end{definition}

This tracking enables optimized arithmetic algorithms. When combining variables with the same pure arithmetic history, we create new cores for uncorrelated variables (rank-2 for addition, rank-1 for multiplication), while for correlated variables sharing common ancestors, we directly update support vector coefficients and powers in existing cores without constructing general block-diagonal or Kronecker product structures.

\subsection{Exploiting Shared Ancestral Distributions}
\label{subsec:registry}

Many applications involve multiple independent random variables drawn from identical distributions. For example, Geometric Brownian Motion introduces independent and identically distributed (i.i.d.) Gaussian variables at each time step, all sharing the same discretization.

\begin{definition}[Distribution Type Registry]
\label{def:dist_registry}
We maintain a registry of unique distribution types, where each type $j$ stores
\begin{equation}
\text{DistType}_j = \{\mathbf{s}_j, \mathbf{m}_j\}.
\end{equation}
Variables $X_1, \ldots, X_m$ sharing distribution type $j$ reference this shared representation rather than storing duplicate vectors.
\end{definition}

This optimization is particularly effective in stochastic differential equations, randomized linear algebra, and Monte Carlo integration where many variables share common distributional forms.

\subsection{Efficient Moment Computation via Cached Contractions}
\label{subsec:moments}

The sparse core representation and distribution registry synergize to enable efficient moment computations. When multiple variables share the same distribution type (e.g., repeated Gaussian increments in Brownian motion), they reference identical support and mass vectors. This enables computing expensive contractions like $\sum_i s_i^m m_i$ once per distribution type rather than per variable, reducing preprocessing cost by a factor of $d/k$ for $d$ variables with $k \le d$ unique distributions.

By Theorem~\ref{thm:moments}, computing the $m$-th moment requires evaluating $\langle \mathcal{S}^{\odot m}, \mathcal{M} \rangle$, which decomposes into three steps: (1) compute the $m$-th Hadamard power of $\mathcal{S}$ using Theorem~\ref{thm:tt_general}\footnote{For pure $\texttt{mult}$ operations, we apply Theorem~\ref{thm:tt_multiplication}.}, (2) contract each tensor core along its fiber dimension, and (3) multiply the resulting matrices to obtain a scalar. The fiber contraction for core $k$ computes:
\begin{equation}
\mathbf{G}^{(k)} = \sum_{i_k=1}^{n_k} \mathbf{G}^{(k)}_{i_k} \cdot m_{i_k}.
\end{equation}
One implementation strategy, which we employ in the evaluation of Section~\ref{sec:experiments}, is to accelerate this computation through precomputed contractions. Each sparse core entry $(i,\ell,\alpha,c)$ indicates that position $(i,\ell)$ contains $c \cdot (\mathbf{s}_{i_k})^{\alpha}$. For moment computation, we need $c \cdot (\mathbf{s}_{i_k})^{\alpha} \cdot \mathbf{m}_{i_k}$. Since cores sharing distribution type $j$ use identical vectors, we precompute:
\begin{equation}
\text{Cache}[j,\alpha] = \langle \mathbf{s}_j^{\alpha}, \mathbf{m}_j \rangle = \sum_{i=1}^{n_j} (\mathbf{s}_j[i])^{\alpha} \cdot \mathbf{m}_j[i]
\end{equation}
for each distribution type $j$ and power $\alpha$. This transforms fiber contraction into a lookup: each sparse entry $(i,\ell,\alpha,c)$ contributes $c \cdot \text{Cache}[j, \alpha]$ to position $(i,\ell)$ of the contracted matrix. The final matrix chain multiplication employs standard sparse algorithms \cite{davis2006direct,gustavson1978two}.

\subsection{Complete Implementation Framework}

We combine these optimizations into a comprehensive tensor train distributional representation:

\begin{definition}[Complete Tensor Train Implementation]
\label{def:tt_implementation}
Let \textit{DistRegistry} be the distribution type registry from Section~\ref{subsec:registry}, \textit{SparseCores} be the collection $\{\mathcal{G}^{(k)}_{\text{sparse}}\}_{k=1}^d$ of sparse core representations from Section~\ref{subsec:sparsecore}, \textit{OpType} be a flag tracking the arithmetic heritage from Section~\ref{subsec:operations}
and \text{CachedContractions} be precomputed values of $\langle \mathbf{s}_j^{\alpha}, \mathbf{m}_j \rangle$ for each distribution type $j$ and required powers $\alpha$ from Section~\ref{subsec:moments}. Then, a complete tensor train representation consists of:
\begin{equation*}
\text{TTImplementation} = \{\text{DistRegistry}, \text{SparseCores}, \text{OpType}, \text{CachedContractions}\}.
\end{equation*}
\end{definition}
The memory complexity of this implementation is discussed in detail in Section~\ref{sec:complexity}.

The full algorithm for computing the result of the arithmetic operation $\star \in \{+, \times\}$ between two random variables $U, V$ represented as tensor trains is presented in Algorithm~\ref{alg:tt_arithmetic}. For the remainder of the paper we will refer to the resulting algorithm as TT-TTR. The full algorithm for moment computation (Section S2) and a detailed example demonstrating the handling of correlated variables (Section S3) are provided in the Supplementary Materials. Both algorithms illustrate how we take full advantage of the representation in Definition~\ref{def:tt_implementation}. 

\begin{algorithm}
\footnotesize
\caption{TT-TTR with Operation History Tracking}
\label{alg:tt_arithmetic}
\begin{algorithmic}[1]
\REQUIRE Two variables $U, V$ represented as tensor trains, operation $\star \in \{\texttt{ADD}, \texttt{MULT}\}$
\ENSURE Tensor train for $Z = U \star V$
\STATE \textbf{Operation Phase:}
\STATE
\IF{OpHistory($U$) = $\star$ and OpHistory($V$) = $\star$}
    \STATE $\mathcal{A}_{\cap} \gets \text{ancestors}(U) \cap \text{ancestors}(V)$
    \STATE
    \FORALL{$k \in \mathcal{A}_{\cap}$} 
        \IF{$\star = \texttt{ADD}$} 
        \STATE Add coefficients of non-zero entries in core $k$ : $c_U + c_V$
        \ELSE 
        \STATE Multiply coeffs and sum powers of unique entry in core $k$: $c_U \cdot c_V$, $\alpha_U + \alpha_V$
        \ENDIF
    \ENDFOR
    \STATE
    \FORALL{$k \in \text{ancestors}(U) \triangle \text{ancestors}(V)$} 
        \STATE Add core with rank 2 if $\star = \texttt{ADD}$, rank 1 if $\star = \texttt{MULT}$ per Theorems \ref{thm:tt_addition} \& \ref{thm:tt_multiplication}
    \ENDFOR
    \STATE OpHistory($Z$) $\gets \star$
\ELSE
    \STATE Align $U$ and $V$ to common ancestor space (Theorem~\ref{thm:tt_alignment})
    \STATE Apply general tensor train $\star$ operation (Theorem~\ref{thm:tt_general})
    \STATE OpHistory($Z$) $\gets$ \texttt{MIXED}
\ENDIF
\STATE
\STATE Store result using sparse representation: $(i,\ell,\alpha,c)$ tuples
\RETURN tensor train for $Z$
\end{algorithmic}
\end{algorithm}

\section{Computational Complexity Analysis}
\label{sec:complexity}

We analyze the computational complexity of three approaches for handling arithmetic operations on discrete probability distributions: the jTTR approach from Bilgin et al.~\cite{bilgin_patent}, Monte Carlo sampling, and our tensor train TTR (TT-TTR) method. Each approach must handle $d$ independent random variables discretized with $n$ points, where these variables are drawn from $k \leq d$ distinct distributions. All approaches must handle both the independent input variables and the resulting joint distributions from arithmetic operations. For the TT-TTR analysis, we focus on the structural sparsity of tensor train cores, specifically the number of nonzero entries (\textit{nnz}) in lateral slices. Due to our tensor train construction, a zero entry in a lateral slice implies the entire fiber is zero, directly translating to computational savings during contractions.

\begin{remark}[Sparsity preservation in tensor train arithmetic]
\label{rem:nnz}
For cores $\mathcal{X}^{(k)}$ and $\mathcal{Y}^{(k)}$ with $\textit{nnz}_x = r_x^2 - z_x$ and $\textit{nnz}_y = r_y^2 - z_y$ nonzero entries, arithmetic operations preserve sparsity as follows. For addition via Theorem~\ref{thm:tt_general}, the block diagonal structure yields $\textit{nnz}_{\oplus} = \textit{nnz}_x + \textit{nnz}_y$, with $2r_xr_y$ zeros introduced in off-diagonal blocks. For multiplication via Theorem~\ref{thm:tt_general}, the Kronecker product structure yields $\textit{nnz}_{\otimes} = \textit{nnz}_x \cdot \textit{nnz}_y = (r_x^2 - z_x)(r_y^2 - z_y)$. Pure operations achieve optimal sparsity: $\textit{nnz} = 3$ for addition and $\textit{nnz} = 1$ for multiplication.
\end{remark}

\subsection{Complexity Results}

We compare the jTTR approach~\cite{bilgin_patent}, Monte Carlo sampling, and our TT-TTR method. The jTTR approach discretizes each dimension with $n = O(1/\varepsilon)$ points to achieve approximation error $\varepsilon$ per marginal distribution, resulting in $N = n^d = O(\varepsilon^{-d})$ total support points~\cite{bilgin_quantization_2025}. Monte Carlo achieves error $\varepsilon$ in computing expectations with $s = O(1/\varepsilon^2)$ samples, independent of dimension~\cite{glasserman2003monte}. It is important to note that in practice, when relying on finite samples, a non-asymptotic analysis~\cite{MR4777182} shows a clear dependence on the dimension. As a matter of fact, the number of samples required to achieve a good enough approximation might be exponential in the dimension. Our TT-TTR method uses the same $n = O(1/\varepsilon)$ points per dimension as jTTR but avoids constructing the exponentially large joint tensor.

The jTTR approach stores the full joint distribution tensor, requiring $O(\varepsilon^{-d})$ memory, exponential in dimension. Monte Carlo maintains only running statistics, requiring $O(\varepsilon^{-2})$ memory regardless of $d$. Our TT-TTR representation stores $d$ sparse cores plus $k$ distribution types, yielding $O(d \cdot \textit{nnz} + k/\varepsilon)$ memory that scales linearly in $d$ rather than exponentially.

Table~\ref{tab:complexity_comparison} summarizes the computational costs to achieve accuracy $\varepsilon$. For construction, the jTTR approach evaluates the arithmetic expression at all $\varepsilon^{-d}$ outcome combinations, where each evaluation requires $O(d)$ operations on the $d$-tuple, yielding $O(d\varepsilon^{-d})$ total complexity. Monte Carlo draws one value from each of the $d$ input distributions and evaluates the arithmetic expression on the resulting $d$-tuple, repeating this $\varepsilon^{-2}$ times for $O(d\varepsilon^{-2})$ complexity. Our TT-TTR method performs sparse operations on cores with \textit{nnz} nonzero entries, yielding $O(d \cdot \textit{nnz})$ construction complexity. This complexity is independent of $\varepsilon$ once the input distributions are discretized. For moment computation, the TT-TTR approach leverages the $k$ distinct distributions by caching contractions for each distribution type and power, then performs sparse matrix-vector products across cores. The $m$-th moment requires $O(mkp/\varepsilon + d \cdot \textit{nnz}^m)$ operations, where the dependence on $k$ rather than $d$ in the first term provides substantial savings when variables share distributions.

\begin{table}[htbp]
\footnotesize
\centering
\caption{Computational complexity to achieve accuracy $\varepsilon$ per dimension for $d$ variables drawn from $k \leq d$ distinct distributions}
\label{tab:complexity_comparison}
\begin{tabular}{lccc}
\toprule
\textbf{Operation} & \textbf{jTTR~\cite{bilgin_patent}} & \textbf{Monte Carlo}$^\ast$& \textbf{TT-TTR} \\
\midrule
Construction & $O(d\varepsilon^{-d})$ & $O(d\varepsilon^{-2})$ & $O(d \cdot \textit{nnz})$ \\
Storage & $O(\varepsilon^{-d})$ & $O(\varepsilon^{-2})$ & $O(d \cdot \textit{nnz} + k/\varepsilon)$ \\
$m$-th moment & $O(\varepsilon^{-d})$ & $O(\varepsilon^{-2})$ & $O(mkp/\varepsilon + d \cdot \textit{nnz}^m)$ \\
Covariance & $O(\varepsilon^{-2d})^{\dagger}$ & $O(\varepsilon^{-2})$ & $O(kp/\varepsilon + d \cdot \textit{nnz}^2)$ \\
\bottomrule
\multicolumn{4}{l}{\footnotesize $k$ = number of unique distributions, $p$ = number of distinct powers, $\textit{nnz}$ = nonzeros per slice}\\
\multicolumn{4}{l}{\footnotesize $^{\dagger}$Worst case when variables have disjoint ancestors}\\
\multicolumn{4}{l}{\footnotesize $^{\ast}$Non-asymptotic analysis reveals dimension-dependent constants in finite-sample settings~\cite{MR4777182}.}\\
\end{tabular}
\end{table}

The practical impact is substantial. Consider the dice example from Section~\ref{sec:example}: computing statistics for the sum of 1000 dice requires $6^{1000}$ storage for the jTTR approach. In contrast, our TT-TTR method with $\textit{nnz} = 3$ (pure addition) and $k = 1$ (all dice share the same distribution) requires only $d \cdot \textit{nnz} \cdot 4 + k \cdot n \cdot 2 = 1000 \cdot 3 \cdot 4 + 1 \cdot 6 \cdot 2 = 12,012$ doubles.

\section{Numerical Experiments}\label{sec:experiments}
We evaluate the computational performance of Algorithms~\ref{alg:tt_arithmetic} and Algorithm 1 (Section S2, supplementary materials) on benchmark problems from uncertainty quantification and stochastic modeling.

\subsection{Experimental Setup}
We compare our TT-TTR approach against two baselines: (i) the jTTR method~\cite{bilgin_patent} that explicitly constructs all $n^d$ outcome combinations, and (ii) Monte Carlo simulation~\cite{glasserman2003monte, metropolis1949monte} for uncertainty propagation. We exclude existing tensor train uncertainty quantification methods~\cite{dolgov2020approximation, novikov2021tensor} from comparison as they address a fundamentally different problem---constructing surrogates for known probability density functions rather than performing arithmetic operations on discrete representations of arbitrary distributions.

We conduct all experiments using single-threaded execution to isolate algorithmic performance from parallelization effects. We implement all methods in Python with NumPy~\cite{harris2020array} on a MacBook Pro (Apple M3 processor, 8GB RAM).

\subsection{Basic Operations and Statistical Calculations}
We evaluate TT-TTR performance on elementary operations with increasing dimensionality. For $d$ independent gaussian random variables $X_i$, with mean 0 and variance 1, we compute the sum $Z = \sum_{i=1}^d X_i$ and product $W = \prod_{i=1}^d X_i$ for $d = 1, \ldots, 5$, using discretization size $n = 2^5 = 32$ per dimension.

Figure~\ref{fig:perf_basic} shows computation time and memory usage comparisons. The jTTR approach exhibits exponential growth in both metrics, while our TT-TTR implementation scales near-linearly. At $d=5$, TT-TTR operations achieve approximately $10^4\times$ speedup, with addition operations specifically reaching $96,000\times$ faster execution. Memory requirements differ by orders of magnitude: hundreds of MB (jTTR) versus KB (TT-TTR) at $d=5$. Note that this experiment was performed without any of the memory-efficient additions from Section~\ref{sec:algorithm}.

\begin{figure}[ht]
\centering
\includegraphics[width=\textwidth]{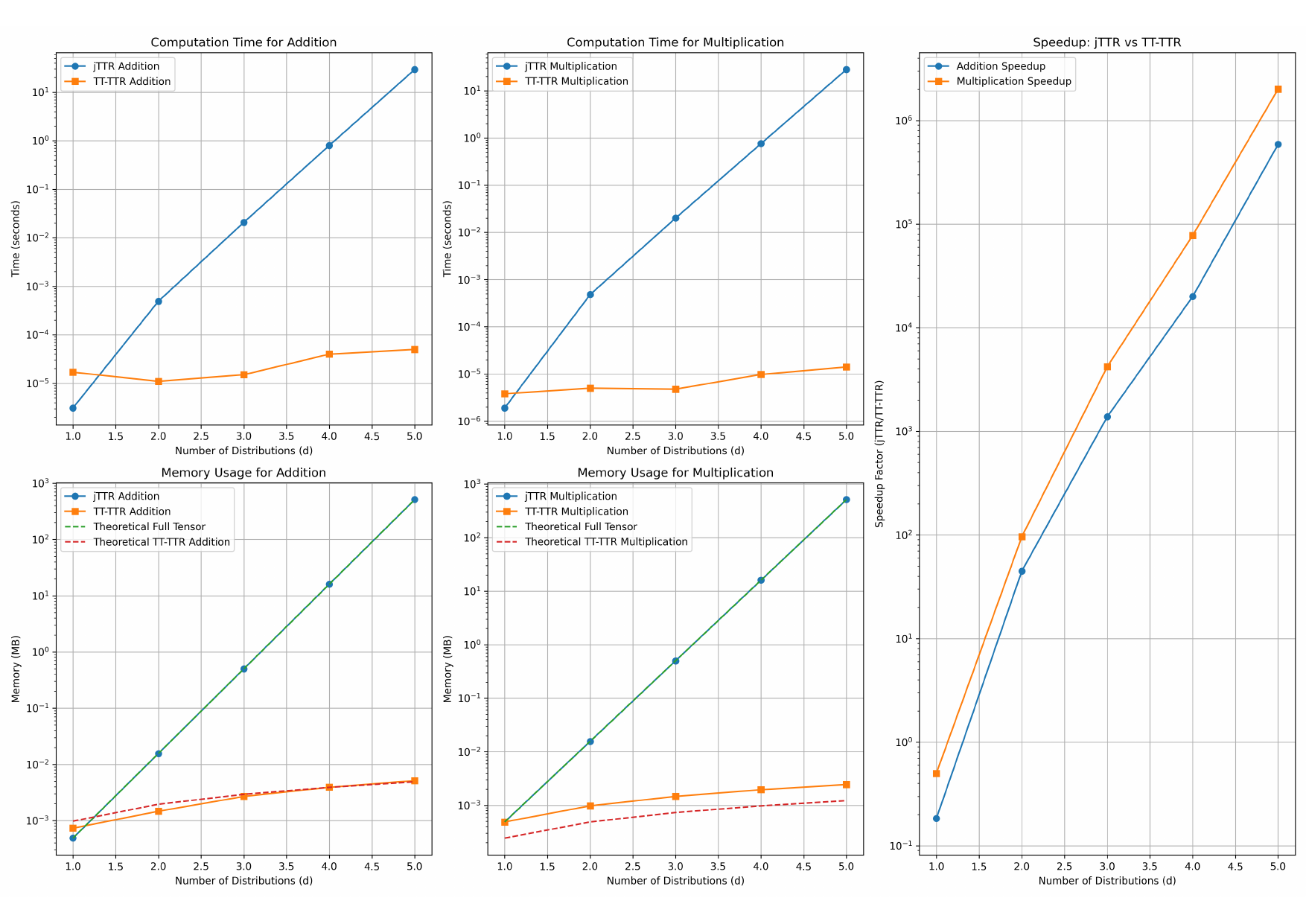}
\caption{Performance comparison between jTTR and TT-TTR implementations. Top row: Computation time for addition (left) and multiplication (right). Bottom row: Memory usage for addition (left) and multiplication (right). Right column: Speedup factor as dimensions increase.}
\label{fig:perf_basic}
\end{figure}

Table~\ref{tab:moments_covariance_combined} compares performance for computing first and second moments $\mu^{\text{disc}}_k$ (Theorem~\ref{thm:moments}). The TT-TTR approach maintains numerical precision while achieving $1,000\times$ speedup at $d=5$. For covariance computation, we evaluate $\text{Cov}^{\text{disc}}(T, Q)$ where $T = X_1 + X_2 + X_3 + X_4$ and $Q = X_5 \times X_6 \times X_7 \times X_8$ with $d=4$ and $n=64$. Table~\ref{tab:moments_covariance_combined} demonstrates accurate covariance computation (Theorem~\ref{thm:covariance}) with similar performance gains. Clearly, since $T$ and $Q$ are uncorrelated, we expect the covariance value to be zero, where the computed value $0.000399$ for both cases is a result of the discretization error.

\begin{table}[ht]
\footnotesize
\setlength{\tabcolsep}{2pt}
\centering
\caption{Performance comparison for moment and covariance computations}
\label{tab:moments_covariance_combined}
\begin{tabular}{l|cc|cc|cc|cc}
\toprule
\multicolumn{9}{c}{\textbf{(a) Moment Computation Performance}} \\
\midrule
$d$ & \multicolumn{2}{c|}{Add (s)} & \multicolumn{2}{c|}{Moments (s)} & \multicolumn{2}{c|}{Speedup} & \multicolumn{2}{c}{Abs. Error} \\
 & jTTR & TT-TTR & jTTR & TT-TTR & Add & Mom & Mean & Var \\
\midrule
2 & $5.0{\times}10^{-4}$ & $1.3{\times}10^{-5}$ & $<\!10^{-4}$ & $5.0{\times}10^{-4}$ & 37 & 0.03 & 0.0 & $8.0{\times}10^{-16}$ \\
3 & $2.1{\times}10^{-2}$ & $4.1{\times}10^{-5}$ & $1.3{\times}10^{-3}$ & $2.4{\times}10^{-4}$ & 512 & 5.4 & $1.4{\times}10^{-16}$ & $3.2{\times}10^{-15}$ \\
4 & $7.9{\times}10^{-1}$ & $7.9{\times}10^{-5}$ & $1.4{\times}10^{-2}$ & $3.5{\times}10^{-4}$ & $10^4$ & 39 & $6.0{\times}10^{-16}$ & $1.1{\times}10^{-15}$ \\
5 & $2.9{\times}10^{1}$ & $3.0{\times}10^{-4}$ & $6.8{\times}10^{-1}$ & $6.4{\times}10^{-4}$ & $9.6{\times}10^4$ & $10^3$ & $3.4{\times}10^{-15}$ & $3.2{\times}10^{-15}$ \\
\midrule
\multicolumn{9}{c}{\textbf{(b) Covariance Computation ($n=64$, $d=4$)}} \\
\midrule
\multicolumn{3}{l|}{Metric} & \multicolumn{2}{c|}{jTTR} & \multicolumn{2}{c|}{TT-TTR} & \multicolumn{2}{c}{Speedup} \\
\midrule
\multicolumn{3}{l|}{Construction (s)} & \multicolumn{2}{c|}{24.49} & \multicolumn{2}{c|}{0.0003} & \multicolumn{2}{c}{80,256×} \\
\multicolumn{3}{l|}{Calculation (s)} & \multicolumn{2}{c|}{0.46} & \multicolumn{2}{c|}{0.0017} & \multicolumn{2}{c}{273×} \\
\multicolumn{3}{l|}{Total (s)} & \multicolumn{2}{c|}{24.95} & \multicolumn{2}{c|}{0.0020} & \multicolumn{2}{c}{12,550×} \\
\multicolumn{3}{l|}{Memory (MB)} & \multicolumn{2}{c|}{384} & \multicolumn{2}{c|}{0.01} & \multicolumn{2}{c}{32,768×} \\
\multicolumn{3}{l|}{Cov. Value} & \multicolumn{2}{c|}{0.000399} & \multicolumn{2}{c|}{0.000399} & \multicolumn{2}{c}{---} \\
\multicolumn{9}{l}{\footnotesize TT-TTR Ranks: $T$ [1,2,2,2,1], $Q$ [1,1,1,1,1]; Relative Error: $2.55 \times 10^{-13}$} \\
\bottomrule
\end{tabular}
\end{table}

These results confirm the complexity predicted in Section~\ref{sec:complexity} where jTTR enumeration with $n^d$ outcomes becomes infeasible, with performance advantages growing exponentially with dimension.

\subsection{Problem 2: Monte Carlo Integration}
This benchmark applies our approach to high-dimensional integration problems of the form $\int_\mathcal{D} f(X_1, \ldots, X_d) d\mathbf{X}$. Such integrals are ubiquitous in Bayesian inference \cite{robert2007bayesian}, rare event simulation \cite{rubino2009rare}, uncertainty quantification \cite{ghanem2017handbook} and further domains ranging from molecular energy estimation in chemistry, to derivatives pricing in finance \cite{glasserman2003monte, ceperley1980ground, neal1993probabilistic}. Traditional deterministic methods, such as Riemann sums or Gaussian quadrature, often struggle with such functions, due to their computational inefficiency and sensitivity to dimensionality.

We compare our approach against standard Monte Carlo integration \cite{metropolis1949monte, glasserman2003monte} to demonstrate our method's advantages. While more sophisticated variants exist, such as quasi-Monte Carlo \cite{dick2013high}, stratified sampling \cite{glynn1992asymptotic}, and importance sampling \cite{owen2013monte}, these methods still suffer from sampling variance and often require problem-specific tuning. Our deterministic approach eliminates sampling variance entirely, achieving machine precision regardless of integrand properties for the considered problems.

We first consider the $d$-dimensional integral
\begin{equation}
I = \int_{[\ell,u]^d} \prod_{i=1}^{d} x_i \, d\mathbf{x},
\end{equation}
which admits the analytical solution $I = \prod_{j=1}^{d} (u^2 - \ell^2)/2$. The Monte Carlo estimator approximates $I$ using $s$ uniformly distributed sample points $\mathbf{x}_k \in [\ell,u]^d$: $\hat{I}_{\text{MC}} = (u - \ell)^d s^{-1} \sum_{k=1}^{s} f(\mathbf{x}_k)$. The TT-TTR approach takes as input $d$ uniform random variables $X_k \in [\ell, u]$ represented by $n$ TTR discretization points. The integral is then computed as $\hat{I}_{\text{TT-TTR}} = (u - \ell)^d \mathbb{E}[\prod_{k=1}^d X_k]$ using Algorithms~\ref{alg:tt_arithmetic} and Algorithm 1 (Section S2, supplementary materials).

We test dimensions $d \in \{5, 25, 45\}$ with sample sizes $s \in \{10^5, 10^6\}$ and representation sizes $n \in \{16, 64\}$. Each Monte Carlo configuration undergoes 50 independent runs to compute the mean relative error $\epsilon_{\text{rel}} = |\hat{I}_{\text{MC}} - I| / I$. We use $u = 2$ throughout and vary $\ell \in \{-1, 1\}$ to illustrate Monte Carlo's sensitivity to zeros in the integrand.

For $f(\mathbf{x}) = \prod_{i=1}^d x_i$ on $[-1,2]^d$, the mean $\mathbb{E}[f] = (0.5)^d$ while $\mathbb{E}[f^2] = 1$, yielding relative variance $\text{Var}[f]/\mathbb{E}[f]^2 \approx 4^d$. This exponential growth causes the catastrophic Monte Carlo errors shown in Table~\ref{tab:mc_tt_comparison}. The domain $[1,2]^d$ avoids zeros, restoring standard convergence. Table~\ref{tab:mc_tt_comparison} shows the TT-TTR approach achieves machine precision for both domains in orders of magnitude less time. This is due to the mean-preserving property of the TTR discretization.

\begin{table}[htbp]
\footnotesize
\setlength{\tabcolsep}{3pt}
\centering
\caption{Monte Carlo vs TT-TTR integration for $\int \prod_{i=1}^{d} x_i \, d\mathbf{x}$ over different domains}
\label{tab:mc_tt_comparison}
\begin{tabular}{@{}r|cc|cc|cc|cc@{}}
\toprule
& \multicolumn{4}{c|}{$[1,2]^d$ (no zeros)} & \multicolumn{4}{c}{$[-1,2]^d$ (with zeros)} \\
\cmidrule{2-5} \cmidrule{6-9}
& \multicolumn{2}{c|}{MC} & \multicolumn{2}{c|}{TT-TTR} & \multicolumn{2}{c|}{MC} & \multicolumn{2}{c}{TT-TTR} \\
$d$ & Err (\%) & Time & Err & Time & Err (\%) & Time & Err & Time \\
\midrule
\multicolumn{9}{c}{\textit{MC: $s = 10^5$, TT-TTR: $n = 16$}} \\
\midrule
 5 & 0.06 & .005 & $<\!10^{-15}$ & .0001 & $8.8$ & .005 & $<\!10^{-15}$ & .0001 \\
25 & 0.32 & .020 & $<\!10^{-15}$ & .0006 & $2.7{\times}10^{6}$ & .023 & $10^{-14.2}$ & .0008 \\
45 & 0.49 & .037 & $10^{-14.8}$ & .0016 & $2.2{\times}10^{12}$ & .039 & $10^{-13.9}$ & .0014 \\
\midrule
\multicolumn{9}{c}{\textit{MC: $s = 10^6$, TT-TTR: $n = 64$}} \\
\midrule
 5 & 0.04 & .049 & $<\!10^{-15}$ & .0001 & $3.1$ & .048 & $10^{-14.9}$ & .0001 \\
25 & 0.10 & .226 & $<\!10^{-15}$ & .0005 & $2.2{\times}10^{6}$ & .238 & $10^{-14.3}$ & .0005 \\
45 & 0.16 & .400 & $10^{-14.8}$ & .0014 & $8.4{\times}10^{12}$ & .439 & $10^{-14.1}$ & .0024 \\
\bottomrule
\end{tabular}
\end{table}

We next test correlation handling with $\mathbf{y} = \mathbf{A}\mathbf{x}$ where $\mathbf{A} \in \mathbb{R}^{d \times d}$ is orthogonal and $\mathbf{x} \sim \text{Uniform}[0,1]^d$. We compute $\int_{\mathbf{A}[0,1]^d} \sum_{i=1}^d y_i \, d\mathbf{y}$ with solution $|\det(\mathbf{A})| \sum_{i,j} \mathbf{A}_{ij}/2$. Table~\ref{tab:transform_comparison} shows TT-TTR integration achieves machine precision for all dimensions and representation sizes while Monte Carlo struggles to reach 0.1\% accuracy even with $10^6$ samples.

\begin{table}[htbp]
\footnotesize
\centering
\caption{Integration performance for $\int_{\mathbf{A}[0,1]^d} \sum_i y_i \, d\mathbf{y}$ with random orthogonal $\mathbf{A}$}
\label{tab:transform_comparison}
\begin{tabular}{@{}r|rrr|rrr@{}}
\toprule
& \multicolumn{3}{c|}{Monte Carlo} & \multicolumn{3}{c}{TT-TTR} \\
$d$ & $s$ & Error (\%) & Time (s) & $n$ & Error & Time (s) \\
\midrule
\multirow{3}{*}{40} & $10^4$ & 5.02 & 0.004 & 8 & $<10^{-15}$ & 0.009 \\
                     & $10^5$ & 1.91 & 0.044 & 16 & $<10^{-15}$ & 0.008 \\
                     & $10^6$ & 0.09 & 0.372 & 32 & $<10^{-15}$ & 0.008 \\
\midrule
\multirow{3}{*}{120} & $10^4$ & 4.01 & 0.010 & 8 & $<10^{-15}$ & 0.140 \\
                      & $10^5$ & 1.54 & 0.102 & 16 & $<10^{-15}$ & 0.159 \\
                      & $10^6$ & 0.36 & 1.589 & 32 & $<10^{-15}$ & 0.141 \\
\midrule
\multirow{3}{*}{200} & $10^4$ & 0.51 & 0.019 & 8 & $<10^{-15}$ & 0.598 \\
                      & $10^5$ & 0.65 & 0.179 & 16 & $<10^{-15}$ & 0.572 \\
                      & $10^6$ & 0.25 & 3.312 & 32 & $<10^{-15}$ & 0.678 \\
\bottomrule
\end{tabular}
\end{table}

\subsection{Problem 3: Integrated Brownian Motion}
This benchmark implements a numerical solution of the stochastic differential equation (SDE) for an integrated Brownian motion (IBM) process \cite{oosterlee2019mathematical}. We compute not only the Wiener process, or Brownian motion $W_T$, but also the integral quantities $I_1 = \int_0^T W_s ds$ and $I_2 = \int_0^T W_s dW_s$ using numerical integration. 

\begin{figure}[ht]
\centering
\includegraphics[width=\textwidth]{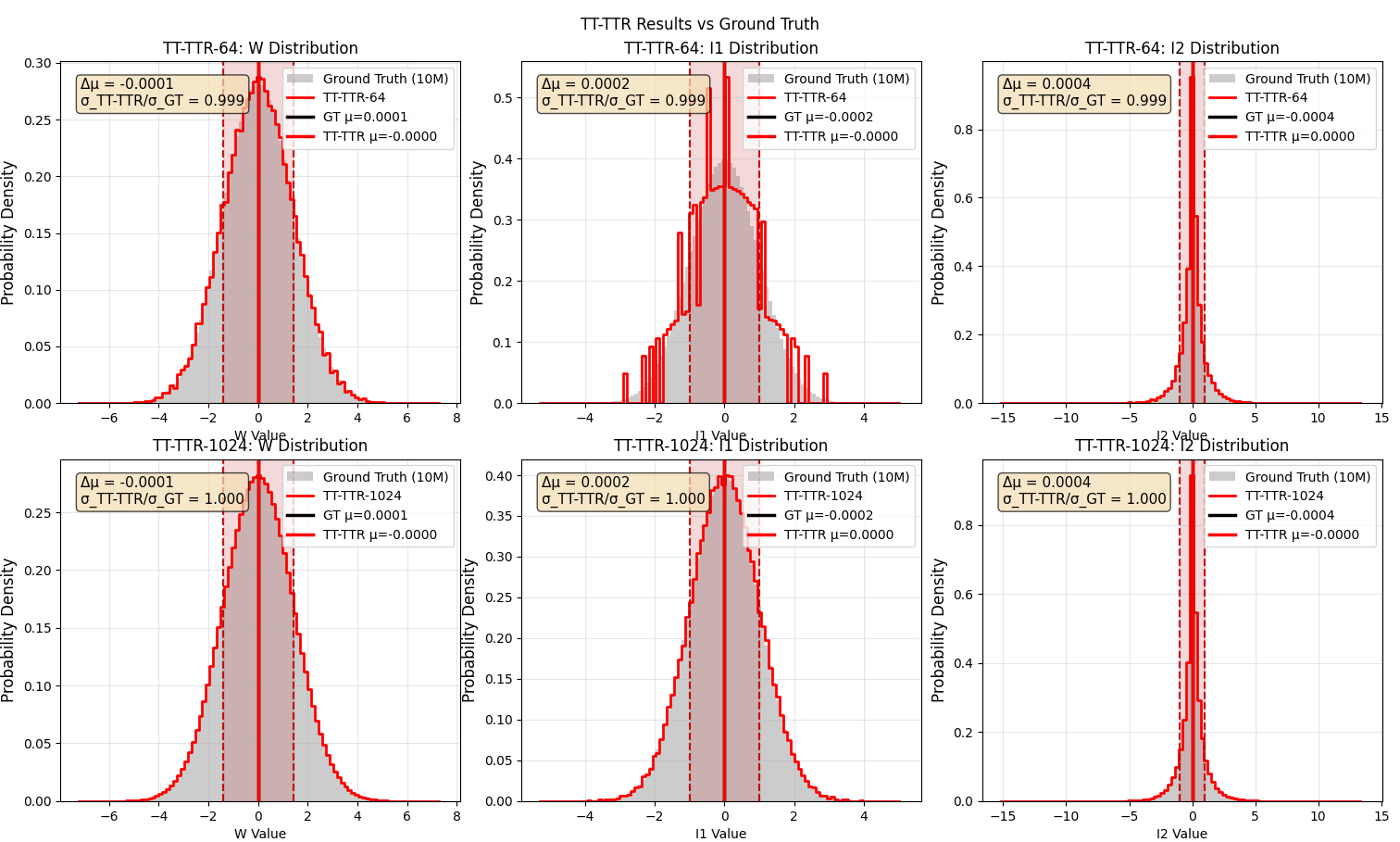}
\caption{Probability density functions of $W_T$, $I_1$, and $I_2$ from the TT-TTR approach with $n=64$ and $n=1024$. Histograms are constructed by binning the discrete support points and their masses.}
\label{fig:tthist}
\end{figure}

These three quantities form our output vector, providing a comprehensive test of correlation handling in the TT-TTR framework. These quantities are relevant in Asian option pricing \cite{vecer2001pricing}, where the payoff depends on the average asset price over time. They also arise in control theory for integrating noise processes \cite{oksendal2013stochastic} and in statistical physics for computing path integrals \cite{kleinert2009path}.

We compare the performance of Algorithm~\ref{alg:tt_arithmetic} against standard Monte Carlo path integrals for this problem. To validate our implementation, we first integrate both the TT-TTR and Monte Carlo approaches to $T = 2$ and compare the accuracy of the resulting probability density functions (PDFs). We create a ground truth PDF by simulating $10^7$ Monte Carlo samples. To compare the accuracy of the different approaches, we plot the PDFs for all three outputs from TT-TTR with $n = 64$ and $n = 1024$ (Figure~\ref{fig:tthist}) and from Monte Carlo with $s = 1000$ and $s = 500000$ (Figure~\ref{fig:mchist}). On these plots, we also compare the computed means and standard deviations. The TT-TTR approach computes the exact mean of 0 up to machine precision, which aligns with the results of the mean-based methods. The small difference in the TT-TTR mean in the figure is, in fact, due to sampling error in the ground truth Monte Carlo.\footnote{For visualization, we construct the histogram for the TT-TTR approach by binning the discrete support points and their corresponding masses, which we obtain by enumerating all the tensor indices, into the same bins as the ground truth histogram.}

\begin{figure}[ht]
\centering
\includegraphics[width=\textwidth]{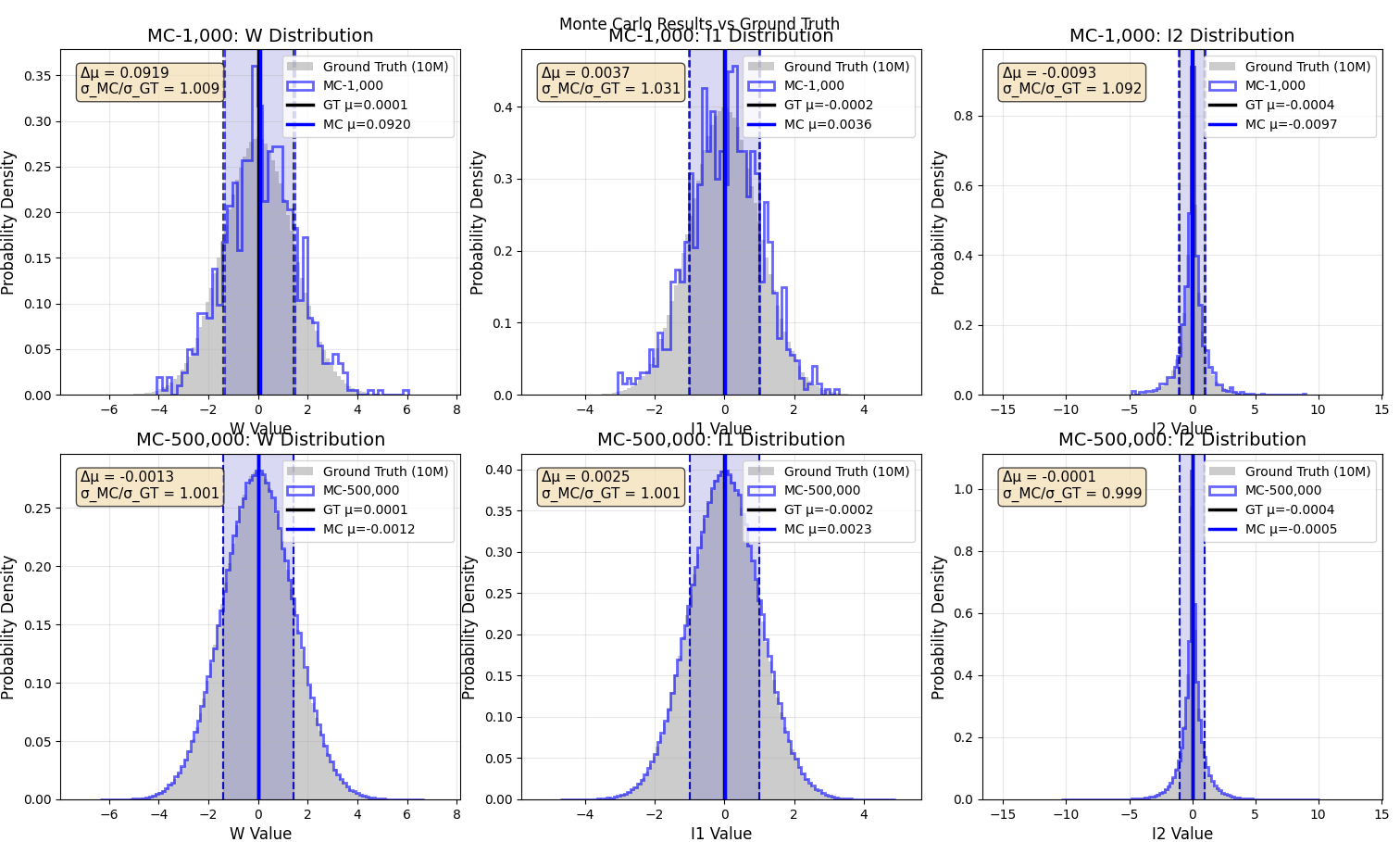}
\caption{Probability density functions of the three outputs from Monte Carlo with $s=1000$ and $s=500000$ paths.}
\label{fig:mchist}
\end{figure}

These plots provide insight into the typical convergence of the TT-TTR and Monte Carlo methods. Building on this understanding, we compare the results on a larger scale by performing the numerical integration up to $T=126$. This corresponds to six months of daily trading day timesteps if the process were based on a financial instrument. Table~\ref{tab:simple_comparison} presents the CPU time and memory comparison between TT-TTR and Monte Carlo for integrating the full path with different parameters and computing the expected value of the three output variables.

\begin{table}[h!]
\footnotesize
\centering
\caption{Performance comparison of TT-TTR and Monte Carlo methods for the Integrated Brownian Motion Benchmark}
\label{tab:simple_comparison}
\begin{tabular}{@{}lcc|lcc@{}}
\toprule
\multicolumn{3}{c|}{TT-TTR} & \multicolumn{3}{c}{Monte Carlo} \\
$n$ & Time (s) & Mem (MB) & $s$ & Time (s) & Mem (MB) \\
\midrule
128 & 0.320 & 54.4 & 100k & 0.161 & 5.3 \\
512 & 0.341 & 59.4 & 1M & 1.799 & 53.4 \\
2048 & 0.437 & 64.7 & 5M & 9.188 & 267.0 \\
\bottomrule
\end{tabular}
\end{table}

This experiment clearly demonstrates the power of our sparse data representation discussed in Section~\ref{sec:algorithm}. The CPU time and memory requirements for the TT-TTR approach remain nearly constant as we drastically refine the discretizations, while Monte Carlo shows linear growth in both metrics with the number of samples $s$. Notably, achieving comparable accuracy would require orders of magnitude more Monte Carlo samples, making the TT-TTR approach particularly advantageous for high-precision requirements.

\subsection{Problem 4: Hutchinson's Trace Estimator}
We evaluate our framework on Hutchinson's trace estimator \cite{hutchinson1990stochastic}, which approximates $\text{tr}(\mathbf{A}) = \mathbb{E}[\mathbf{v}^T \mathbf{A} \mathbf{v}]$ for $\mathbf{v}$ with i.i.d. Rademacher entries. Table~\ref{tab:trace_estimation} compares direct computation, Monte Carlo sampling, and our TT-TTR approach for $d \times d$ SPD matrices with eigenvalues in $[0.1, 1]$.

\begin{table}[htbp]
\footnotesize
\centering
\caption{Trace estimation performance across methods and dimensions}
\label{tab:trace_estimation}
\begin{tabular}{@{}lc|ccc|ccc|ccc@{}}
\toprule
& & \multicolumn{3}{c|}{$d = 40$} & \multicolumn{3}{c|}{$d = 120$} & \multicolumn{3}{c}{$d = 200$} \\
Method & $s/n$ & Est & Err & Time & Est & Err & Time & Est & Err & Time \\
\midrule
Direct & -- & 20.366 & 0 & .000 & 67.133 & 0 & .000 & 111.35 & 0 & .001 \\
MC & 100 & 20.536 & .170 & .001 & 67.426 & .293 & .001 & 111.84 & .493 & .001 \\
MC & 10k & 20.383 & .017 & .076 & 67.171 & .038 & .080 & 111.39 & .048 & .090 \\
TT-TTR & 2 & 20.366 & 0 & .015 & 67.133 & $<$1e-15 & .436 & 111.35 & $<$1e-15 & .984 \\
\bottomrule
\end{tabular}
\end{table}

Although our TT-TTR approach achieves machine precision, it requires more computation time than direct evaluation or Monte Carlo for this problem. This example validates the correctness of our TT-TTR arithmetic framework while highlighting opportunities for improvement. Future work incorporating controlled approximations within the TT-TTR structure could reduce computational costs while preserving deterministic accuracy, making the approach competitive for applications requiring high-precision variance reduction.

\section{Conclusions}\label{sec:conclusions}

We developed a tensor train TTR framework (TT-TTR) for exact arithmetic operations on discrete representations of probability distributions. The approach combines Dirac mixture representations with tensor train decompositions to achieve polynomial complexity scaling while maintaining deterministic computation. This addresses the exponential scaling limitation that prevents existing deterministic methods such as jTTR~\cite{bilgin_patent} from handling high-dimensional problems.

The key contributions of this work include algorithms that adaptively preserve the tensor train structure under addition, subtraction, and multiplication of random variables. We proved polynomial complexity bounds under standard rank assumptions and implemented the method using sparse data structures that further reduce memory requirements by a factor proportional to the discretization size. The implementation computes moments, and covariances of latent variables with polynomial complexity.

Experiments validated the theoretical predictions across multiple domains. For basic arithmetic operations, we achieved speedups exceeding $10^4$ at dimension $d=5$. Monte Carlo integration tests showed our method maintaining machine precision where sampling approaches exhibited errors of $10^{12}$ due to high variance integrands. The 1,000-dice example reduced memory requirements from $6^{1000}$ to 12,012 doubles. The results for Integrated Brownian Motion demonstrate particular promise for challenging stochastic calculus applications, which are crucial in financial engineering for modeling path-dependent derivatives and credit risk, and traditionally requires millions of Monte Carlo samples for accurate computation. For trace estimation, while our current results do not match direct computation speed, the experiments demonstrate new avenues for tensor train applications. The ability to approximate traces using only 2 discretization points opens possibilities for developing specialized intra-tensor-train approximation methods that could exploit the compressed representation structure for randomized linear algebra operations.

\subsection{Limitations of TT-TTR}
While our results are promising, important challenges remain.

The framework excels at linear operations like addition and multiplication, but many applications demand more. Division, exponentiation, and other nonlinear operations break the tensor train structure.

Statistical extraction presents another frontier. Computing moments and covariances is straightforward (Theorems~\ref{thm:moments} and~\ref{thm:covariance}), but financial applications often need quantiles for risk assessment \cite{hull2018options}. Extracting quantiles from tensor train representations remains challenging since it requires efficient CDF evaluation.

Despite these limitations our method enables previously intractable deterministic computations in uncertainty quantification, as demonstrated in Section~\ref{sec:experiments}. The ability to perform deterministic operations on probability distributions opens new possibilities in uncertainty quantification, particularly where reproducibility and exactness are paramount. As tensor decomposition methods advance, we anticipate that many current limitations will be addressed, expanding the applicability of distributional arithmetic.

\bibliographystyle{siam}
\bibliography{references}

\end{document}